\documentclass{amsart}

\usepackage{amsmath,amssymb,amsthm,graphicx,slashed,subfig}%
\usepackage{amsmath}%
\setcounter{MaxMatrixCols}{30}%
\usepackage{amsfonts}%
\usepackage{amssymb}%

\usepackage{hyperref,enumerate}

\setcounter{tocdepth}{2}

\pdfoutput=1

\usepackage{colortbl}

\usepackage[all,cmtip]{xy}



\usepackage{graphicx}
\providecommand{\U}[1]{\protect\rule{.1in}{.1in}}
\newtheorem{thm}{Theorem}

\newtheorem{lma}[thm]{Lemma}

\newtheorem{prop}[thm]{Proposition}
\newtheorem{defn}[thm]{Definition}

\newtheorem{rem}[thm]{Remark}

\def\A{\mathcal{A}}
\def\cS{\mathcal{S}}
\def\B{\mathcal{B}}

\newcommand{\bra}[1]{\langle #1 |}
\newcommand{\ket}[1]{| #1 \rangle}
\def\C{\mathbb{C}}

\newcommand{\CZ}[1]{C^*(\Z)_{(#1)}}

\def\E{\mathcal{E}}

\def\epsilon{\varepsilon}

\def\H{\mathcal{H}}

\def\phi{\varphi}
\def\R{\mathbb{R}}

\def\cS{\mathcal{S}}

\DeclareMathOperator{\tr}{Tr}
\newcommand{\Toep}[1]{C(S^1)^{(#1)}}
\def\U{\mathcal{U}}
\def\Z{\mathbb{Z}}

\newtheoremstyle{commentstyle}
  {0.2cm}{0.2cm}
  {\sf}
  {0cm}
  {\bfseries}{ }
  {0cm}
  {\thmname{#1}\thmnumber{ #2}:\thmnote{ #3}}

\theoremstyle{commentstyle}
\newtheorem{mycomment}{Comment}


\title[GH convergence of state spaces for spectral truncations]{Gromov--Hausdorff convergence of state spaces for spectral truncations}

\author{Walter D. van Suijlekom}

\date{January 12, 2021}

\address{Institute for Mathematics, Astrophysics and Particle Physics, Radboud
University Nijmegen, Heyendaalseweg 135, 6525 AJ Nijmegen, The Netherlands.
}
\email{waltervs@math.ru.nl}

\begin{document}
\maketitle

\begin{abstract}
  We study the convergence aspects of the metric on spectral truncations of geometry. We find general conditions on sequences of operator system spectral triples that allows one to prove a result on Gromov--Hausdorff convergence of the corresponding state spaces when equipped with Connes' distance formula. We exemplify this result for spectral truncations of the circle, Fourier series on the circle with a finite number of Fourier modes, and matrix algebras that converge to the sphere. 
  \end{abstract}

\section{Introduction}
\label{sect:intro}


We continue our study of spectral truncations of (noncommutative) geometry that we started in \cite{CS20} and here focus on the metric convergence aspect of so-called operator system spectral triples.
This is part of a program that tries to extend the spectral approach to geometry to cases where (possibly) only part of the spectral data is available, very much in line with \cite{ALM14}. And even though the mathematical motivation should be sufficient, there is a clear physical motivation for this. Indeed, from experiments we will only have access to part of the spectrum since we are limited by the power and resolution of our detectors: we typically study physical phenomena up to a certain energy scale and with finite resolution.

The usual spectral approach to geometry \cite{C94} in terms of a $*$-algebra $\A$ of operators on $\H$ and a self-adjoint operator $D$ on $\H$ has been adapted in \cite{ALM14,CS20} to deal with such spectral truncations. The $*$-algebra is replaced by an {\em operator system} $E$ (dating back to \cite{CE77}), which is by definition a $*$-closed subspace of $\B(\H)$  containing the identity.

More precisely, we have the following definition. 
\begin{defn} An {\em operator system spectral triple} is a triple $(\E,\H,D)$ where $\E$ is a dense subspace of an operator system $E$ in $\mathcal B(\H)$, $\H$ is a Hilbert space and $D$ is a self-adjoint operator in $\H$ with compact resolvent and such that $[D,T]$ is a bounded operator for all $T \in \E$.
\end{defn}
An operator system comes with an {\em ordering}, namely, one can speak of positive operators in $E \subseteq \mathcal B(\H)$. As a consequence states on $E$ can be defined as positive linear functionals of norm 1. The above triple then induces a (generalized) distance function on the state space $\cS(E)$ by setting
\begin{equation}
\label{eq:dist}
d(\phi,\psi) = \sup_{x \in \E} \left\{ | \phi(x) - \psi(x)|: \|x\|_1 \leq 1 \right\}
\end{equation}
where $\|\cdot \|_1$ denote the {\em Lipschitz semi-norm}:
$$
\| x \|_1 = \| [D,x]\|; \qquad (x \in\E).
$$

If $\E = \A$ is a $*$-algebra then this reduces to the usual distance function \cite{C89,C94} on the state space of the $C^*$-algebra $A = \overline \A$ . It also agrees with the definition of quantum metric spaces based on order-unit spaces given in  \cite{Rie99,Rie00,Ker03,KL09,Lat15b,Lat16,Lat16b}. Note, however, that in the present work we restrict our attention to the metric structure on the state spaces, that is to say, as an ordinary metric space. In contrast, in {\em loc.cit.} the authors develop the notion of quantum metric space and quantum Gromov--Hausdorff distance which are formulated in the dual category (of $C^*$-algebras, order-unit spaces, {\em et cetera}) and with a more general version of the above distance function.

\bigskip

So, we will study the properties of this metric distance function and the notions of Gromov--Hausdorff convergence it gives rise to. We consider sequences of spectral triples on operator systems and formulate general conditions under which we prove the state spaces equipped with the above distance functions to converge to a limiting state space. The latter is also described by an operator system spectral triple. One of the novelties of our work is that we use the ideas of correspondences between compact metric spaces and their relation to Gromov--Hausdorff convergence as described for instance in \cite[Section 7.3.3]{BBI01}.

We exemplify our main result on Gromov--Hausdorff convergence by considering:
\begin{itemize}
\item spectral truncations on the circle;
\item Fourier series with only a finite number of non-zero Fourier coefficients;
  \item matrix algebras converging to the sphere.
  \end{itemize}
Previous results in the literature on the distance function for spectral truncations have been reported in \cite{ALM14,GS19a,GS19b}. However, in these works the distance function on states of the truncated system was only computed after pulling back these states to the original metric geometry. Extensions of the results contained in the present paper to tori are contained in the master's thesis \cite{Ber19}.

The convergence of matrix algebras to the sphere was studied by Rieffel in \cite{Rie02} while computer simulations were performed in \cite{BG16}. Using the general approach below we re-establish  part of this convergence result, namely, the Gromov--Hausdorff convergence of the corresponding (classical) metric spaces.

We note that other convergence results on the distance function on quantum spaces are obtained for quantum tori in \cite{Lat15}, for coherent states on the Moyal plane in \cite{ALV13}. More generally, in \cite{GS19b} certain sets of states have been identified for which the Connes' distance formula has good convergence properties with respect to a given metric on a Riemannian manifold.

\subsubsection*{Acknowledgements}
I would like to thank IHÉS for their hospitality and support during a visit in February 2020. I thank Alain Connes, Jens Kaad and Marc Rieffel for fruitful discussions. I am grateful to an anonymous referee for useful remarks and suggestions.

\section{Gromov--Hausdorff convergence for operator systems}
\label{sect:gh}
Given a sequence of operator system spectral triples $(\E_n,\H_n,D_n)$ we want to understand when and how this approximates an operator system spectral triple $(\E,\H,D)$. We will adopt the point of view of \cite{Rie00} and consider the convergence (in Gromov--Hausdorff distance) of the corresponding state spaces $\cS(E_n) \to \cS(E)$ equipped with the distance formula \eqref{eq:dist}. 
Since this notion of convergence is most suited to deal with {\em compact} metric spaces, we will assume below ({\em cf.} Theorem \ref{thm:GH-conv}) that the topology defined by the metric $d$ coincides with the weak-$*$ topology (with respect to which we know the state spaces to be compact). For the examples that follow this assumption is indeed satisfied, see also Remark \ref{rem:topologies} below.

\begin{defn}
  \label{defn:GH-conv}
Let $\{ (\E_n,\H_n,D_n)\}_n$ be a sequence of operator system spectral triples and let $(\E,\H,D)$ be an operator system spectral triple. An {\em approximate order isomorphism} for this set of data is given by linear maps $R_n: E \to E_n$ and $S_n: E_n \to E$ for any $n$ such that the following three condition hold:
\begin{enumerate}
\item the maps $R_n,S_n$ are positive,  unital  maps
\item there exist sequences $\gamma_n, \gamma_n'$ both converging to zero such that
\begin{align*}
\| S_n \circ R_n (a) - a\| &\leq \gamma_n \| a \|_1,\\
\| R_n \circ S_n (h) - h\| &\leq \gamma_n' \|h \|_1.
\end{align*}
\end{enumerate}
\end{defn}
In other words, we use the Lipschitz semi-norms to quantify how close the positive maps $R_n$ and $S_n$ are to being each other's inverse ({\it i.e.} form an order isomorphism) as $n\to \infty$. 

We will call a map between operator systems {\em $C^1$-contractive} if it is contractive with respect to both the operator norms and the Lipschitz semi-norms (thus assuming that we are given two operator system spectral triples for them). Finally, we say that the pair of maps $(R_n,S_n)$ is a {\em $C^1$-approximate order isomorphism} if $(R_n,S_n)$ is an approximate order isomorphism in the above sense and for which all maps $R_n$ and $S_n$ are $C^1$-contractive.

Note that the positivity and unitality condition on $R_n, S_n$ in particular implies that we may pull back states as follows:
\begin{align*}
&R_n^* : \cS(E_n) \to \cS(E); \qquad \phi_n \mapsto  \phi_n \circ R_n,\\
&S_n^* : \cS(E) \to \cS(E_n); \qquad \phi\mapsto \phi \circ S_n.
\end{align*}

\begin{rem}
  \label{rem:topologies}
  Even though it would be more natural to consider completely positive maps $R_n,S_n$ between the operators systems $E_n$ and $E$, this turns out not to be necessary for the proof of our main result as in fact we restrict our attention to the states space and the metric thereon. However, in all examples discussed below we find that $E$ is a commutative $C^*$-algebra so that these maps are in fact completely positive ({\em cf.} \cite[Theorems 3.9 and 3.11]{Pau02}).  
  \end{rem}

Let us denote the distance functions \eqref{eq:dist} for $(\E_n,\H_n,D_n)$ and $(\E,\H,D)$ by $d_{E_n}$ and $d_{E}$, respectively.

\begin{prop}
  \label{prop:estimates-dist}
If $(R_n,S_n)$ is a $C^1$-approximate order isomorphism for $(\E_n,\H_n,D_n)$ and $(\E,\H,D)$, then 
\begin{enumerate}
\item For all $\phi_n,\psi_n \in \cS(E_n)$ we have
$$
d_E(\phi_n \circ R_n, \psi_n \circ R_n) \leq d_{E_n} (\phi_n,\psi_n) \leq d_E(\phi_n\circ R_n, \psi_n \circ R_n) +2 \gamma_n'.
$$
\item For all $\phi,\psi \in \cS(E)$ we have
$$
d_{E_n}(\phi \circ S_n, \psi \circ S_n) \leq d_E (\phi,\psi) \leq d_{E_n}(\phi \circ S_n, \psi \circ S_n) +2 \gamma_n.
$$
\end{enumerate}
\end{prop}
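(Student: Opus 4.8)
The plan is to establish all four inequalities directly from the distance formula \eqref{eq:dist}, exploiting $C^1$-contractivity for the two left-hand bounds and combining the approximate-inverse estimates of Definition~\ref{defn:GH-conv}(2) with the triangle inequality for the two right-hand bounds. Since the roles of $(R_n,\gamma_n)$ and $(S_n,\gamma_n')$ in (1) and (2) are symmetric, I would prove (1) in full and then indicate that (2) follows by the same computation with the two maps interchanged.

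For the left inequality in (1), I would unfold $d_E(\phi_n\circ R_n,\psi_n\circ R_n)$ as the supremum of $|\phi_n(R_n a)-\psi_n(R_n a)|$ over $a\in\E$ with $\|a\|_1\le 1$. Because $R_n$ is $C^1$-contractive we have $\|R_n a\|_1\le\|a\|_1\le 1$, so each $R_n a$ is an admissible test element for $d_{E_n}(\phi_n,\psi_n)$; hence every term in the first supremum is dominated by $d_{E_n}(\phi_n,\psi_n)$, and taking the supremum over $a$ gives the claim. The left inequality in (2) is identical with $S_n$ in place of $R_n$.

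The right inequality in (1) is the crux. Here I would test $d_{E_n}(\phi_n,\psi_n)$ against an arbitrary $h\in\E_n$ with $\|h\|_1\le 1$ and insert the ``approximate identity'' $R_n S_n$, writing
\[
\phi_n(h)-\psi_n(h) = \bigl(\phi_n(h)-\phi_n(R_n S_n h)\bigr) + \bigl(\phi_n(R_n S_n h)-\psi_n(R_n S_n h)\bigr) + \bigl(\psi_n(R_n S_n h)-\psi_n(h)\bigr).
\]
The two outer terms I would bound using that states have norm one together with Definition~\ref{defn:GH-conv}(2): namely $|\phi_n(h-R_n S_n h)|\le\|h-R_n S_n h\|\le\gamma_n'\|h\|_1\le\gamma_n'$, and likewise for $\psi_n$. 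For the middle term I would note that $\phi_n(R_n S_n h)=(\phi_n\circ R_n)(S_n h)$ and that $\|S_n h\|_1\le\|h\|_1\le 1$ by $C^1$-contractivity of $S_n$, so $S_n h$ is an admissible test element and the middle term is bounded by $d_E(\phi_n\circ R_n,\psi_n\circ R_n)$. Summing the three estimates and taking the supremum over $h$ yields $d_{E_n}(\phi_n,\psi_n)\le d_E(\phi_n\circ R_n,\psi_n\circ R_n)+2\gamma_n'$. The right inequality in (2) then follows by the mirror-image computation, inserting $S_n R_n$ and using $\|S_n R_n a-a\|\le\gamma_n\|a\|_1$.

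The argument is essentially routine, so I do not expect a genuine obstacle; the points that demand care are bookkeeping ones. First, one must ensure that the test elements $R_n a$ and $S_n h$ lie in the domains $\E_n$ and $\E$ on which the Lipschitz seminorms, and hence the suprema, are defined; this is in fact built into the notion of $C^1$-contractivity, since an inequality such as $\|R_n a\|_1\le\|a\|_1$ only makes sense once $R_n a$ lies in the domain of $\|\cdot\|_1$. Second, the clean constants $2\gamma_n'$ and $2\gamma_n$ hinge on using $\|h\|_1\le 1$ (resp. $\|a\|_1\le 1$) to convert the right-hand sides of the Definition~\ref{defn:GH-conv}(2) estimates into genuine constants, which is precisely why those estimates were phrased relative to the Lipschitz seminorm rather than the operator norm.
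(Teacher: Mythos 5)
Your proposal is correct and follows essentially the same route as the paper's proof: the left-hand bounds come from $C^1$-contractivity making $R_n a$ (resp.\ $S_n h$) admissible test elements, and the right-hand bounds come from inserting $R_n S_n h$ (resp.\ $S_n R_n a$) via the triangle inequality, bounding the outer terms by $\gamma_n'$ (resp.\ $\gamma_n$) using that states have norm one, and the middle term by the pulled-back distance since $\|S_n h\|_1\le 1$. The bookkeeping points you flag (domains of the seminorms, converting the seminorm estimates into constants) are exactly the points the paper relies on implicitly.
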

\proof
Since $R_n$ is Lipschitz contractive it follows that if $\| a \|_1 \leq 1$ then also $\|R_n(a)\|_1\leq 1$. Hence
$$
\sup_{a \in \E} \left\{ | \phi\circ R_n(a) - \psi\circ R_n(a)|: \| a\|_1 \leq 1 \right\} \leq \sup_{h \in \E_n}  \left\{ | \phi (h) - \psi(h)|: \| h\|_1 \leq 1 \right\}.
$$
This establishes the first inequality (also proven in \cite[Proposition 3.6]{ALM14}).

For the second, note that for all $h \in \E_n$ with $\|h \|_1 \leq 1$ we have 
\begin{align*}
| \phi_n(h) -\psi_n(h) | &\leq |\phi_n(R_n(S_n(h))) - \psi_n(R_n(S_n(h))) | \\
&\quad + | \phi_n(h)- \phi_n(R_n(S_n(h)))| +| \psi_n(h) - \psi_n(R_n(S_n(h)))| \\
& \leq  d_E(\phi_n\circ R_n, \psi_n \circ R_n) + 2 \gamma_n'.
\end{align*}
since $\| \phi_n\| = \|\psi_n\|=1$ and $\|S_n(h)\|_1 \leq \| h \|_1 \leq 1$.
The second claim follows similarly. 
\endproof

The final justification for the above definition of $C^1$-approximate order isomorphism is our following, main result. 
\begin{thm}
  \label{thm:GH-conv}
  Suppose $(\E_n,\H_n,D_n)$ ($n=1,2,\cdots$) and $(\E,\H,D)$ are operator system spectral triples such that the topologies on $\cS(E_n)$ and $\cS(E)$ defined by the metrics $d_{E_n}$ and $d_E$, respectively, agree with the weak-$*$ topologies on them.

If $(R_n,S_n)$ is a $C^1$-approximate order isomorphism for $(\E_n,\H_n,D_n)$ and $(\E,\H,D)$,  then the state spaces $(\cS(E_n), d_{E_n})$ converge to $(\cS(E),d_E)$ in Gromov--Hausdorff distance. 
\end{thm}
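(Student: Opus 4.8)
The plan is to deduce the convergence from the characterization of Gromov--Hausdorff distance in terms of correspondences, following \cite[Section 7.3.3]{BBI01}: for compact metric spaces $X,Y$ one has $d_{GH}(X,Y) = \tfrac{1}{2}\inf_{\mathcal R} \operatorname{dis}(\mathcal R)$, where $\mathcal R$ ranges over correspondences (relations in $X \times Y$ whose two projections are both surjective) and $\operatorname{dis}(\mathcal R)$ denotes the distortion. First I would record that the hypothesis on the topologies, together with weak-$*$ compactness of the state spaces (Banach--Alaoglu), guarantees that $(\cS(E_n), d_{E_n})$ and $(\cS(E), d_E)$ are genuine compact metric spaces, so that this characterization is applicable.

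Next I would build an explicit correspondence out of the pullback maps $R_n^*$ and $S_n^*$. Since the pullback of a single map need not be surjective onto the target state space, I take the union of the two graphs,
$$
\mathcal R_n = \{(\phi_n, \phi_n \circ R_n) : \phi_n \in \cS(E_n)\} \cup \{(\psi \circ S_n, \psi) : \psi \in \cS(E)\}.
$$
The first set surjects onto $\cS(E_n)$ and the second onto $\cS(E)$, so $\mathcal R_n$ is indeed a correspondence, and it then remains to estimate $\operatorname{dis}(\mathcal R_n)$ and show it tends to $0$.

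I would split the distortion into three cases according to the types of the two chosen pairs. For two pairs of the first type, Proposition \ref{prop:estimates-dist}(1) gives $|d_{E_n}(\phi_n,\psi_n) - d_E(\phi_n\circ R_n, \psi_n \circ R_n)| \leq 2\gamma_n'$; for two pairs of the second type, Proposition \ref{prop:estimates-dist}(2) gives the analogous bound $2\gamma_n$. The mixed case is where the real work lies: given $(\phi_n, \phi_n\circ R_n)$ and $(\psi\circ S_n, \psi)$ I must compare $d_{E_n}(\phi_n, \psi\circ S_n)$ with $d_E(\phi_n\circ R_n, \psi)$. Applying Proposition \ref{prop:estimates-dist}(1) to the states $\phi_n, \psi\circ S_n \in \cS(E_n)$ bounds $d_{E_n}(\phi_n, \psi\circ S_n)$ by $d_E(\phi_n\circ R_n, \psi\circ S_n\circ R_n)$ up to $2\gamma_n'$, and then a triangle-inequality step in $d_E$ replaces $\psi\circ S_n\circ R_n$ by $\psi$. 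That last replacement is the crux: using that $\psi$ has norm one together with the approximate-inverse bound $\|S_n\circ R_n(a) - a\| \leq \gamma_n \|a\|_1$ from Definition \ref{defn:GH-conv}, one gets $d_E(\psi\circ S_n\circ R_n, \psi) = \sup_{\|a\|_1\leq 1} |\psi(S_n R_n(a) - a)| \leq \gamma_n$, so the mixed case is controlled by $2\gamma_n' + \gamma_n$.

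Collecting the three cases yields $\operatorname{dis}(\mathcal R_n) \leq 2(\gamma_n + \gamma_n')$ and hence $d_{GH}(\cS(E_n), \cS(E)) \leq \gamma_n + \gamma_n' \to 0$, which is the assertion. The only genuine obstacle is the mixed-pair estimate; everything else is bookkeeping on top of the two inequalities already established in Proposition \ref{prop:estimates-dist}. It is worth checking explicitly that no closedness or compactness of $\mathcal R_n$ beyond surjectivity of its projections is required for the distortion formula, which is indeed the case.
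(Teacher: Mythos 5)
Your proof is correct and takes essentially the same route as the paper's: bound the Gromov--Hausdorff distance by half the distortion of the correspondence formed from the graphs of $R_n^*$ and $S_n^*$, then control the distortion via Proposition \ref{prop:estimates-dist}. The two points where you go beyond the paper's write-up are in fact improvements in completeness rather than approach: you take the \emph{union} of the two graphs (the paper writes an intersection, evidently a typo, since the intersection need not project surjectively onto either state space), and you supply the mixed-pair estimate $2\gamma_n' + \gamma_n$ via $d_E(\psi \circ S_n \circ R_n, \psi) \leq \gamma_n$, a case the paper's appeal to Proposition \ref{prop:estimates-dist} leaves implicit.
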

\proof
 
This follows by applying techniques for correspondences between metric spaces and their relation to Gromov--Hausdorff convergence via the notion of distortion ({\em cf.} \cite[Theorem 7.3.25]{BBI01}). In the case at hand one has for the Gromov--Hausdorff distance that 
$$
d_{\textup{GH}} \left ((\cS(E_n),d_{E_n}), (\cS(E),d_{E}) \right) \leq  \frac 12 \textup{dis} (\mathfrak R_n) 
  $$
  where the correspondence $\mathfrak R_n \subset \cS(E_n)\times S(E)$ is defined by
  $$
\mathfrak R_n = \left\{ ( \phi_n, R_n^* (\phi_n) ) : \phi_n \in \cS(E_n) \right\} \cap \left\{ ( S_n(\phi), \phi ) : \phi \in \cS(E) \right\}
  $$
with distortion ({\em cf.} \cite[Defn 7.3.21]{BBI01})
$$
\textup{dis}(\mathfrak R_n) = \sup\left \{ \left| d_{E_n}(\phi_n , \phi_n') - d_E (\phi, \phi') \right| : (\phi_n,  \phi) , ( \phi_n',  \phi')  \in \mathfrak R_n   \right\}
$$
But from Proposition \ref{prop:estimates-dist} it follows that this can be bounded by $2 \gamma_n + 2 \gamma_n'$, which converges to 0 as $n \to \infty$. 
\endproof

\begin{rem}
  Given an operator system spectral triple $(E,\H,D)$, it is an interesting question to see when the metric topology on $\cS(E)$ defined by $d$ coincides with the weak-$*$ topology. For the commutative case, this was already established in \cite{C89} while more generally it is also shown in that paper that if the set $\{ h \in E:  \| [D,h ] \| \leq 1 \}/\C 1$ is bounded in $E$, then $d$ is a metric. Rieffel then established \cite[Theorem 1.8]{Rie98} the most general result (for the compact case) stating that when the above set is totally bounded, the $d$-topology agrees with the weak-$*$ topology. 

    Below we will consider only finite-dimensional operator system spectral triples for which also $\ker [D,\cdot ] = \C$, so that the above set is totally bounded. Indeed, in this case $\| [D,\cdot ]\|$ induces a norm on the quotient $E/ \C 1$ for which the unit ball is compact. Hence, under these assumptions the $d$-topology coincides with the weak$-*$ topology; a fact that we will tacitly use throughout the rest of the paper. 
\end{rem}

\begin{rem}
  Following up on the previous remark, there is a close relation between our notion of $C^1$-approximate order isomorphism and the {\em $(\epsilon,C)$-approximations} of operator systems $E$ equipped with a Lipschitz semi-norm $L$ that were considered recently in \cite{Kaa20}. Such $(\epsilon,C)$-approximations are defined by two maps $\imath,\phi:E \to F$ into some other operator system $F$ such that (1) $C^{-1} \|a \| \leq \| \imath(a) \| \leq C \| a \|$, $\| \phi(a)\| \leq  C \| a \|$ for all $a \in E$; (2) $\text{ran} (\phi) \subseteq F$ is finite dimensional; and (3) $\| \imath(a) - \phi(a)\| \leq \epsilon L(a)$. 

  Thus, if $(R_n,S_n)$ is a $C^1$-approximate order isomorphism in the sense we just defined, then, with the additional assumption that $\text{ran}(R_n)$ or $\text{ran}(S_n)$ is finite-dimensional, the pair $(\imath=\text{id}_E,S_n \circ R_n)$ gives a $(\gamma_n ,1)$-approximation of the pair $(E,\| \cdot \|_1)$. Note that in all examples discussed below this assumption of finite-dimensionality will be met.
  \end{rem}

\section{Examples of Gromov--Hausdorff convergence}
\subsection{Spectral truncations of the circle converge}
We will analyze a spectral truncation of the distance function on the circle, the latter being described by the spectral triple
\begin{equation}
\left(\A = C^\infty(S^1), \H = L^2(S^1),D = - i \frac{d}{dx} \right).
\end{equation}
We will consider a spectral truncation defined by the orthogonal projection $P=P_{n}$ of rank $n$ onto span$_\C\{ e_{1},e_{2},\ldots , e_{n}\}$ for some fixed $n \geq  1$, where $e_k(x) = e^{i k x}$ ($k \in \Z$) is the orthonormal (Fourier) eigenbasis of $D$. In the following we will suppress the representation of $C(S^1)$ on $L^2(S^1)$ by pointwise multiplication and simply write $f$ for the corresponding bounded operator. 
An arbitrary element $T=PfP$ in $PC(S^1)P$ can be written as an $n\times n$ Toeplitz matrix $T_{kl} = \langle e_k ,f e_l \rangle = a_{k-l}$ in terms of the Fourier coefficients $a_j$ of $f$. In matrix form we thus have
\begin{equation}\label{toeplitz}
T= \begin{pmatrix} a_0 & a_{-1} & \cdots &a_{-n+2} &a_{-n+1} \\
    a_1 & a_0 & a_{-1} & & a_{-n+2}\\
    \vdots & a_1 &a_0 &\ddots & \vdots\\
a_{n-2} & & \ddots & \ddots & a_{-1} \\
a_{n-1} & a_{n-2} & \cdots & a_1 & a_0\\
\end{pmatrix}.
\end{equation}
The corresponding operator system $P C(S^1) P = P C^\infty(S^1) P$ is called the {\em Toeplitz
  operator system} and is denoted by $\Toep{n}$; it has been analyzed at
length in \cite{CS20}.

An operator system spectral triple for the Toeplitz operator system is given by $(\Toep{n} , P L^2(S^1),P D P)$. 
\subsubsection{Fej\'er kernel}
Clearly, the compression $f \mapsto PfP$ by $P=P_n$ defines a positive map $R_n: C(S^1)\to \Toep{n}$. In order to find an approximate inverse to this map we take inspiration from \cite[Section 2]{Rie02}. In fact, we define $S_n: \Toep{n} \to C(S^1)$ to be its (formal) adjoint when we equip $C(S^1)$ with the $L^2$-norm and $\Toep{n}$ with the (normalized) Hilbert--Schmidt norm. Let $\alpha_x$ denote the natural action of $S^1$ on $\Toep{n}$, and define a norm 1 vector $\ket \psi$ in $PL^2(S^1)$ by
$$
\ket \psi = \frac{1}{\sqrt{n}} \left ( e_{1} + \cdots + e_n \right).
$$

\begin{figure}
\centering
\includegraphics[scale=.6]{./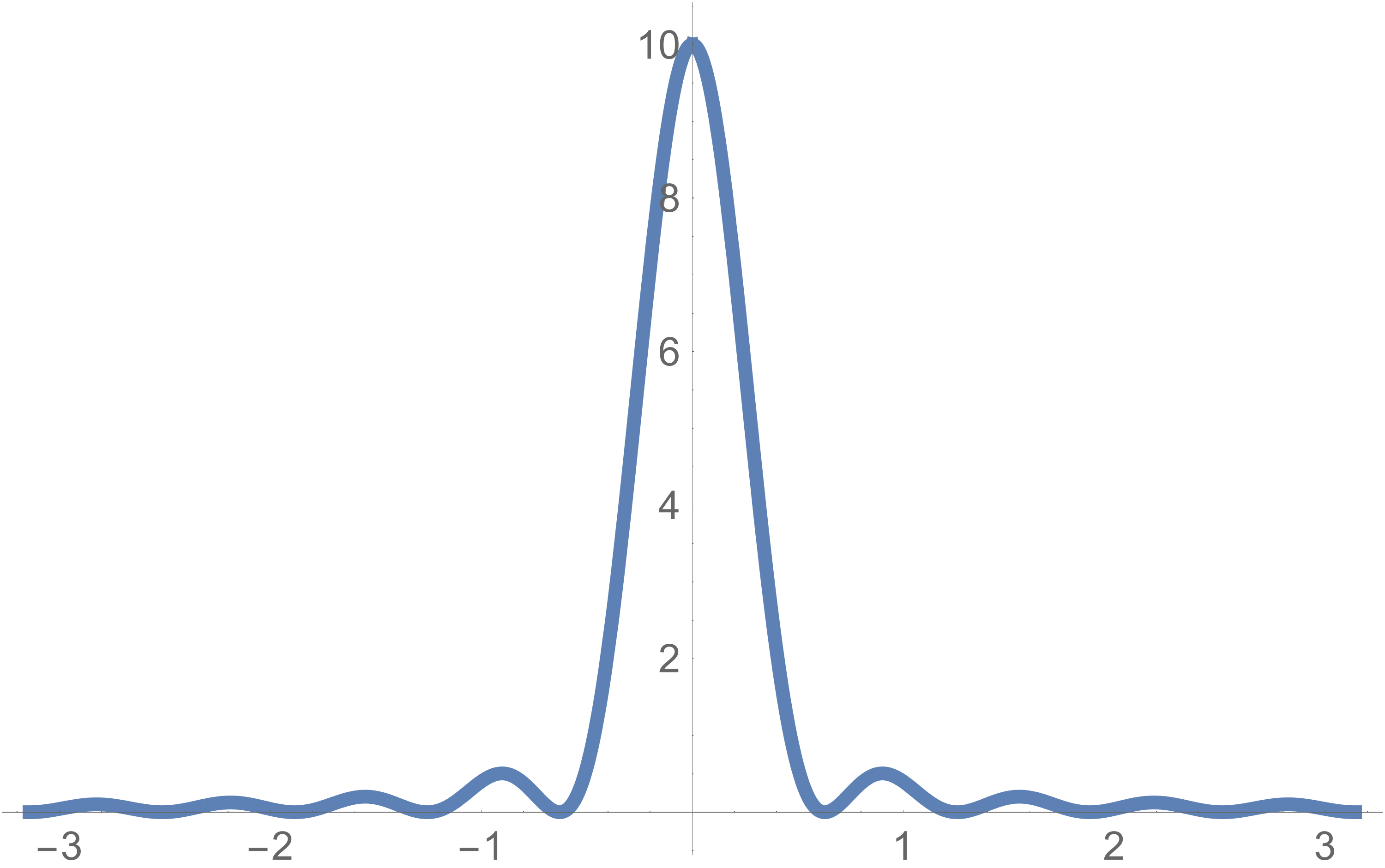}
\caption{The Fej\'er kernel $F_N = \frac 1N \frac{\sin^2(Nx/2)}{\sin^2(x/2)}$ for $N=10$.}
\end{figure}

\begin{prop}
\label{prop:adjoint-HS}
The map $S_n : \Toep{n} \to C(S^1)$ defined for any $T\in \Toep{n}$ by $S_n(T)(x) = \tr \left(\ket{\psi}\bra{\psi} \alpha_x(T)\right)$ satisfies 
$$
\langle f, S_n(T) \rangle_{L^2(S^1)} =\frac 1{n} \tr\left( (R_n(f))^* T \right).
$$
Moreover, we may write
$$
S_n(R_n(f))(x) = \sum_{k=-n+1}^{n-1} \left(1- \frac{|k|}{n} \right) a_k e^{ik x} = (F_{n}* f )(x)
$$
in terms of the Fej\'er kernel $F_{n}$ and the Fourier coefficients $a_k$ of $f$.
\end{prop}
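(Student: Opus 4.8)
The plan is to reduce both assertions to a single explicit computation in the Fourier basis $\{e_1,\dots,e_n\}$. First I would record how the circle acts: since $P=P_n$ commutes with the translation unitaries $U_x$, which act diagonally by $U_x e_k = e^{ikx}e_k$, the action restricts to $\Toep{n}$ as $\alpha_x(T)=U_x T U_x^*$, with matrix entries $(\alpha_x(T))_{kl} = e^{i(k-l)x}T_{kl}$. Since $\ket{\psi}\bra{\psi}$ has every entry equal to $1/n$ on the block $\{1,\dots,n\}$, the defining trace collapses to
$$
S_n(T)(x) = \langle \psi, \alpha_x(T)\psi\rangle = \frac1n\sum_{k,l=1}^n e^{i(k-l)x}T_{kl},
$$
and this identity drives both parts of the proof.

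For the adjoint relation I would expand $\langle f, S_n(T)\rangle_{L^2(S^1)}$ using this expression, interchange the finite sum with the integral, and use orthonormality in the form $\frac{1}{2\pi}\int_0^{2\pi}\overline{f(x)}\,e^{i(k-l)x}\,dx = \overline{a_{k-l}}$ to extract the Fourier coefficients of $f$. This gives $\langle f, S_n(T)\rangle_{L^2(S^1)} = \frac1n\sum_{k,l}\overline{a_{k-l}}\,T_{kl}$. On the other side, $R_n(f)$ is the Toeplitz matrix with entries $a_{k-l}$, so $(R_n(f))^*$ has entries $\overline{a_{l-k}}$; computing $\tr\big((R_n(f))^*T\big)$ and relabelling indices produces precisely the same sum. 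Conceptually this just confirms that the given formula realises $S_n$ as the Hilbert--Schmidt adjoint of $R_n$, which is how it was motivated.

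For the Fej\'er identity I would set $T=R_n(f)$, i.e. $T_{kl}=a_{k-l}$, in the displayed identity above and reorganise the double sum according to the value $m=k-l$. The number of pairs $(k,l)\in\{1,\dots,n\}^2$ with $k-l=m$ is exactly $n-|m|$ for $|m|\le n-1$, which turns the uniform state into the weights $1-|m|/n$ and yields the stated trigonometric polynomial. To recognise it as $F_n*f$ I would recall that the Fej\'er kernel has Fourier coefficients $\widehat{F_n}(k)=1-|k|/n$ for $|k|<n$ and $0$ otherwise, so that the convolution theorem $\widehat{F_n*f}(k)=\widehat{F_n}(k)\,a_k$ matches the two expressions term by term.

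There is no deep obstacle here; the whole proposition is essentially bookkeeping. The one genuinely substantive point is the combinatorial count giving the multiplicity $n-|m|$, since this is exactly what converts the uniform vector state $\ket{\psi}\bra{\psi}$ into the Fej\'er weights. The main thing to get right is the choice of sign convention for the translation action, so that the phase appears as $e^{i(k-l)x}$ and the final sum carries $e^{ikx}$ rather than $e^{-ikx}$.
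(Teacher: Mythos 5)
Your proposal is correct and takes essentially the same route as the paper: both rest on the expansion $S_n(T)(x) = \frac{1}{n}\sum_{k,l} e^{i(k-l)x}T_{kl}$ coming from the uniform vector state, together with the diagonal count $\#\{(k,l): k-l=m\} = n-|m|$ that produces the Fej\'er weights. The only difference is bookkeeping order --- the paper derives the Fej\'er identity first and then checks the adjoint relation via Parseval for $\langle g, F_n \ast f\rangle$, whereas you verify the adjoint relation directly from orthonormality and obtain the Fej\'er formula by specializing $T = R_n(f)$ --- which is cosmetic.
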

\proof
Let us first check the formula for $S_n(T)$ by computing that
$$
\tr \left(\ket{\psi}\bra{\psi} \alpha_x(T)\right) = \frac 1 {n} \sum_{k,l} T_{kl} e^{i (k-l)x}= \frac 1 {n} \sum_{k=-n+1}^{n-1} (n- |k|) a_{k} e^{ikx} 
$$
Thus, $S_n(T) = F_{n} * f$ when $T=PfP$ ---again understanding $f$ as an operator acting on $L^2(S^1)$--- and we may use elementary Fourier theory  ({\em cf.} \cite[Proposition 3.1(vi)]{SS03})to derive
\begin{align*}
  \langle g, S_n(T) \rangle &= \langle g, F_{n} * f \rangle = \sum_{k=-n+1}^{n-1} \overline{b_k } a_k \left(1- \frac{|k|}{n} \right),
\intertext{where $b_k$ are the Fourier coefficients of $g$. On the other hand, we have}
\frac 1{n} \tr\left( (R_n(g))^* T \right) &= \frac 1{n} \sum_{k,l=-n+1}^{n-1} \overline{b_{k-l}} a_{k-l} = \frac 1 {n} \sum_{k=-n+1}^{n-1} \overline{b_k } a_k \left(n- |k| \right).
\qedhere
\end{align*}
\endproof

\subsubsection{The circle as a limit of its spectral truncations}
Let us now show in a series of Lemma's that the conditions of Definition \ref{defn:GH-conv} are satisfied.

\begin{lma}
\label{lma:est-comm-f}
For any $f \in C^\infty(S^1)$ we have $\| R_n(f) \| \leq \| f \|$ and $\| [D,R_n(f)]\| \leq \| [D,f]\|$. 
\end{lma}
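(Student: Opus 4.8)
The plan is to exploit the fact that $R_n$ is the compression map $f \mapsto P_n f P_n$ and that $P_n$ is an orthogonal projection, so both inequalities should follow from the general principle that compressing by a projection does not increase operator norms. For the first inequality, I would simply observe that $\|R_n(f)\| = \|P_n f P_n\| \leq \|P_n\|\,\|f\|\,\|P_n\| = \|f\|$, since $\|P_n\|=1$ for an orthogonal projection and the representation of $C(S^1)$ on $L^2(S^1)$ is isometric. This is immediate and routine.

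The second inequality is the one requiring a genuine idea, and I expect it to be the main obstacle. The subtlety is that the Dirac operator for the truncated triple is $PDP$, not $D$, so $\|[D,R_n(f)]\|$ really means $\|[PDP, P f P]\|$ computed on $PL^2(S^1)$. The key observation is that $P_n = P$ projects onto $\mathrm{span}\{e_1,\dots,e_n\}$, which is spanned by eigenvectors of $D$, so $P$ commutes with $D$ on the relevant subspace; more precisely $DP = PDP = PD$ when restricted appropriately, and in particular $PDP$ agrees with $DP$ on the range of $P$. I would use this to rewrite
\begin{equation*}
[PDP, PfP] = PDP\,PfP - PfP\,PDP = P\,D\,PfP - PfP\,D\,P = P\,(DPfP - PfPD)\,P,
\end{equation*}
and then insert the identity to recognize the inner expression in terms of $[D,f]$ compressed by projections. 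Concretely, since $P$ commutes with $D$ on the eigenspaces, $[PDP,PfP] = P[D,PfP]P$, and a further manipulation using $DP=PD$ (on the image) should yield $[PDP, PfP] = P[D,f]P$ as operators on $PL^2(S^1)$.

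Once the identity $[PDP, R_n(f)] = P[D,f]P$ is established, the second inequality reduces to the first principle again: $\|P[D,f]P\| \leq \|[D,f]\|$ because $P$ is a norm-one projection. Thus the whole lemma collapses to (i) the trivial norm bound for compressions, and (ii) the algebraic identity showing that the truncated commutator $[PDP, PfP]$ equals the compression $P[D,f]P$ of the original commutator. The crux is step (ii): one must be careful that the commutator is taken with $PDP$ and that the projections can be moved past $D$ precisely because $P$ projects onto a span of $D$-eigenvectors, so that cross terms of the form $PD(1-P)fP$ and $Pf(1-P)DP$ either vanish or reassemble correctly. I would verify this either abstractly via $DP=PD$ on $\mathrm{ran}(P)$, or, if cleaner, directly in the Toeplitz matrix picture \eqref{toeplitz} where $D$ acts diagonally as multiplication by $k$ on $e_k$, making $[D,\cdot]$ act on a Toeplitz matrix by multiplying the entry $T_{kl}=a_{k-l}$ by $(k-l)$, from which the norm comparison is transparent.
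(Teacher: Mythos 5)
Your proposal is correct and follows exactly the paper's argument, which is the one-line observation that $R_n(f)=PfP$, that $P$ commutes with $D$ (being a spectral projection onto a span of $D$-eigenvectors), and that compression by a projection is contractive. The identity $[PDP,PfP]=P[D,f]P$ you isolate as the crux is precisely what the paper's phrase ``$P$ commutes with $D$'' is invoking, so your write-up is just a more detailed version of the same proof.
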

\proof
Since $R_n(f) = P f P$ and $P$ commutes with $D$ this follows directly since $P$ is a projection.
\endproof

\begin{lma}
\label{lma:est-f}
There exists a sequence $\{\gamma_n\}$ converging to 0 such that
$$
\| f - S_n (R_n(f)) \| \leq \gamma_n \| [D,f]\| 
$$
for all $f \in C^\infty(S^1)$.
\end{lma}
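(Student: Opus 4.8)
The plan is to reduce the claim to a classical statement about the Fej\'er kernel as an approximate identity and then make it quantitative using the Lipschitz semi-norm. By Proposition \ref{prop:adjoint-HS} we have $S_n(R_n(f)) = F_n * f$, so the left-hand side is exactly $\| f - F_n * f\|_\infty$. First I would identify the Lipschitz semi-norm: since $f$ acts by pointwise multiplication and $D = -i\,d/dx$, a direct computation gives $[D,f] = -i f'$ as a multiplication operator, whence $\| f \|_1 = \| [D,f]\| = \| f' \|_\infty$. Thus the goal becomes the uniform estimate $\| f - F_n * f \|_\infty \leq \gamma_n \| f'\|_\infty$ with $\gamma_n \to 0$ independent of $f$.

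Next I would exploit that $F_n \geq 0$ and that it has total mass one. Writing the convolution with the normalized Haar measure on $S^1$, this normalization lets me insert $f(x) = \frac{1}{2\pi}\int_{-\pi}^{\pi} F_n(y) f(x)\,dy$ and subtract, giving
$$ f(x) - (F_n*f)(x) = \frac{1}{2\pi}\int_{-\pi}^{\pi} F_n(y)\bigl( f(x) - f(x-y)\bigr)\, dy. $$
The mean-value estimate $|f(x) - f(x-y)| = \left| \int_{x-y}^x f'(t)\,dt\right| \leq \| f'\|_\infty\, |y|$ for $|y| \leq \pi$ then yields, uniformly in $x$,
$$ \| f - F_n * f\|_\infty \leq \| f'\|_\infty \cdot \gamma_n, \qquad \gamma_n := \frac{1}{2\pi}\int_{-\pi}^{\pi} F_n(y)\, |y|\, dy. $$
This defines the sequence $\gamma_n$ and establishes the inequality, so it remains only to check that $\gamma_n \to 0$.

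The main (and really the only) point is the convergence $\gamma_n \to 0$, that is, that the first absolute moment of the Fej\'er kernel vanishes; this is where the concentration of $F_n$ near $y=0$ is used. I would prove it by splitting the integral at a threshold $\delta \in (0,\pi)$. On $|y|\leq \delta$ I bound $|y|\leq \delta$ and use $\frac{1}{2\pi}\int F_n = 1$ to get a contribution at most $\delta$. On $\delta < |y|\leq \pi$ I use the explicit formula $F_n(y) = \frac{1}{n}\sin^2(ny/2)/\sin^2(y/2) \leq 1/(n\sin^2(\delta/2))$, so that part is bounded by $\pi/(2 n \sin^2(\delta/2))$. Hence $\gamma_n \leq \delta + \pi/(2 n \sin^2(\delta/2))$; taking $\limsup_n$ gives $\limsup_n \gamma_n \leq \delta$, and letting $\delta \to 0$ gives $\gamma_n \to 0$ (one may take, e.g., $\delta_n = n^{-1/4}$ for an explicit rate). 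Since the resulting $\gamma_n$ does not depend on $f$, this completes the argument.
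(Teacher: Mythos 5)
Your proof is correct and follows essentially the same route as the paper: reduce to $S_n(R_n(f))=F_n*f$ via Proposition \ref{prop:adjoint-HS}, identify $\|[D,f]\|$ with the Lipschitz constant $\|f'\|_\infty$, and bound $\|f-F_n*f\|_\infty$ by the same constant $\gamma_n=\frac{1}{2\pi}\int_{-\pi}^{\pi}F_n(y)\,|y|\,dy$. The only difference is that you verify $\gamma_n\to 0$ by hand (splitting the integral at $\delta$ and using the explicit formula for $F_n$), where the paper simply cites the good-kernel properties of the Fej\'er kernel from Stein--Shakarchi; this makes your argument self-contained but is not a different method.
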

\proof
Since $\|[D,f]\|$ is equal to the Lipschitz constant for $f$ \cite[Proposition 1]{C89} we have that $| f(x) - f(z) | \leq |x-z| \| [D,f]\|$. We use this in the following estimate
\begin{align*}
| f(x) - S_n(R_n(f)) (x) | &\leq \frac{1}{2 \pi} \int_{-\pi}^\pi F_{n}(y) | f(x) - f(y-x) | dy \\
&\leq \frac{1}{2 \pi} \int_{-\pi}^\pi F_{n}(y)  |y|  dy \cdot \| [D,f]\|=: \gamma_n \|[D,f]\|.
\end{align*}
Becasue the Fej\'er kernels form an approximate delta-function at $0$ ({\em cf.} \cite[Theorem 4.1 and Lemma 5.1]{SS03}) imply that $\gamma_n \to 0$.
\endproof

\begin{lma}
\label{lma:est-comm-T}
For any $T \in \Toep{n}$ we have $\| S_n(T) \| \leq \|T \|$ and $\| [D,S_n(T)] \| \leq \| [D,T]\|$.
\end{lma}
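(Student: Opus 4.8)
The plan is to regard $S_n(T)$ as a multiplication operator on $L^2(S^1)$ (recall $S_n(T)$ is a trigonometric polynomial, hence smooth), so that its operator norm equals the supremum norm and $[D,S_n(T)] = -i\,(S_n(T))'$ is again multiplication by a continuous function. The two claimed inequalities thus reduce to estimates on $\sup_x |S_n(T)(x)|$ and $\sup_x |(S_n(T))'(x)|$ in terms of $\|T\|$ and $\|[D,T]\|$.

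For the first inequality I would start from $S_n(T)(x) = \bra\psi \alpha_x(T)\ket\psi$. The key observation is that $\alpha_x$ is implemented by conjugation with the diagonal unitary $U_x = \diag(e^{ikx})_{k=1}^n$, that is $\alpha_x(T) = U_x T U_x^*$; this is read off from the entrywise description $(\alpha_x(T))_{kl} = a_{k-l}\,e^{i(k-l)x}$ obtained in the proof of Proposition \ref{prop:adjoint-HS}. Since conjugation by a unitary is isometric we get $\|\alpha_x(T)\| = \|T\|$, and since $\ket\psi$ is a unit vector, $|S_n(T)(x)| = |\bra\psi \alpha_x(T)\ket\psi| \leq \|\alpha_x(T)\| = \|T\|$ for every $x$. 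Taking the supremum over $x$ gives $\|S_n(T)\| \leq \|T\|$.

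For the second inequality the central step is the intertwining identity
$$
[D, S_n(T)] = S_n([D,T]).
$$
I would derive this from equivariance of $S_n$: because $\alpha_x \circ \alpha_t = \alpha_{x+t}$, the defining formula yields $S_n(\alpha_t(T))(x) = S_n(T)(x+t)$, so $S_n$ intertwines the rotation action $\alpha_t$ on $\Toep{n}$ with translation on $C(S^1)$. Differentiating at $t=0$, and using that the generator of $\alpha_t$ is $T \mapsto i[D,T]$ (here $D = \diag(k)$ on $PL^2(S^1)$) while the generator of translation is $g \mapsto i[D,g]$, linearity of $S_n$ produces the displayed identity. Equivalently one can check it by a direct Fourier computation: $[D,T]$ is again Toeplitz with $k$-th diagonal $k\,a_k$, and the explicit formula $S_n(T)(x) = \tfrac1n\sum_{k=-n+1}^{n-1}(n-|k|)a_k e^{ikx}$ from Proposition \ref{prop:adjoint-HS} shows both sides equal $\tfrac{i}{n}\sum_{k}(n-|k|)\,k\,a_k e^{ikx}$.

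Once the identity is in hand, the result follows by applying the first (operator-norm) inequality to the Toeplitz matrix $[D,T]$, giving $\|[D, S_n(T)]\| = \|S_n([D,T])\| \leq \|[D,T]\|$. The one point requiring care is that $[D,T]$ remains in $\Toep{n}$, so that $S_n$ and the first inequality may legitimately be applied to it; this holds because commuting with the diagonal operator $D = \diag(k)$ preserves the constant-diagonal (Toeplitz) structure. I expect the equivariance/intertwining step to be the only substantive content, with everything else reducing to the routine identification of the norm and commutator of a multiplication operator.
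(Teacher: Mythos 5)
Your proof is correct and takes essentially the same route as the paper: the paper's key display $|[D,S_n(T)](x)| = |\tr(\ket{\psi}\bra{\psi}\,\alpha_x([D,T]))| \leq \|\ket{\psi}\bra{\psi}\|_1\,\|\alpha_x([D,T])\| \leq \|[D,T]\|$ is precisely your intertwining identity $[D,S_n(T)] = S_n([D,T])$ combined with the same pointwise contraction bound (unit trace-class projection against operator norm, with $\alpha_x$ isometric) that you use for the first inequality. One trivial slip: in your Fourier verification both sides equal $\tfrac1n\sum_k(n-|k|)\,k\,a_k e^{ikx}$, without the factor $i$.
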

\proof First note that $[D,S_n(T)]$ is a function on $S^1$ (it is $i$ times the derivative of $S_n(T)$). Moreover, we have
\begin{align*}
|[D,S_n(T)](x)| &= |\tr \left( \ket \psi \bra\psi \alpha_x ([D,T]) \right)|
\leq \| \ket \psi \bra \psi \|_1 \| \alpha_x ( [D,T]) \| \leq \| [D,T]\|.
\end{align*}
Since this holds for any $x$, we may take the supremum to arrive at the desired inequality. The other inequality is even easier. 
\endproof

\begin{lma}
\label{lma:est-T}
There exists a sequence $\{\gamma_n'\}$ converging to 0 such that
$$
\| T - R_n (S_n(T)) \| \leq \gamma'_n \| [D,T]\| 
$$
for all $T \in \Toep{n}$.
\end{lma}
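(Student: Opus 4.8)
The plan is to recognise $R_n\circ S_n$ as an averaging of the gauge orbit of $T$ against the Fej\'er kernel, thereby reducing the estimate to essentially the same computation already carried out in Lemma \ref{lma:est-f}. Concretely, from Proposition \ref{prop:adjoint-HS} we know that $S_n(T)$ has Fourier coefficients $(1-|k|/n)a_k$, so that $R_n(S_n(T)) = P\, S_n(T)\, P$ is the Toeplitz matrix with entries $(1-|k-l|/n)\,a_{k-l}$. Since $\alpha_x$ multiplies the $(k,l)$-entry of a Toeplitz matrix by $e^{i(k-l)x}$, and since the $m$-th Fourier coefficient of $F_n$ equals $1-|m|/n$, a direct comparison of matrix entries yields the operator-valued integral formula
\begin{equation*}
R_n(S_n(T)) = \frac{1}{2\pi}\int_{-\pi}^{\pi}F_n(x)\,\alpha_x(T)\,dx.
\end{equation*}

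Because the Fej\'er kernel satisfies $\frac{1}{2\pi}\int_{-\pi}^{\pi}F_n(x)\,dx = 1$, I can then write the difference as
\begin{equation*}
T - R_n(S_n(T)) = \frac{1}{2\pi}\int_{-\pi}^{\pi}F_n(x)\,\bigl(T-\alpha_x(T)\bigr)\,dx,
\end{equation*}
and estimate its operator norm by $\frac{1}{2\pi}\int_{-\pi}^{\pi}F_n(x)\,\|T-\alpha_x(T)\|\,dx$. The remaining task is to control $\|T-\alpha_x(T)\|$. Writing $D$ for $PDP=\diag(1,\dots,n)$ on $PL^2(S^1)$, one has $\alpha_x(T)=e^{ixD}Te^{-ixD}$ and hence $\tfrac{d}{ds}\alpha_s(T)=i\,\alpha_s([D,T])$, so that
\begin{equation*}
T-\alpha_x(T) = -i\int_0^x \alpha_s([D,T])\,ds.
\end{equation*}
Since each $\alpha_s$ is conjugation by a unitary and therefore isometric, $\|\alpha_s([D,T])\|=\|[D,T]\|$, giving $\|T-\alpha_x(T)\|\leq |x|\,\|[D,T]\|$.

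Combining the two steps yields
\begin{equation*}
\|T-R_n(S_n(T))\| \leq \left(\frac{1}{2\pi}\int_{-\pi}^{\pi}F_n(x)\,|x|\,dx\right)\|[D,T]\|,
\end{equation*}
so I would set $\gamma_n' := \frac{1}{2\pi}\int_{-\pi}^{\pi}F_n(x)\,|x|\,dx$, which is precisely the constant $\gamma_n$ appearing in the proof of Lemma \ref{lma:est-f}; it converges to $0$ because the Fej\'er kernels form an approximate identity at $0$. I expect the only real subtlety to be the derivation of the integral representation of $R_n\circ S_n$, that is, seeing that the Fej\'er damping of the Toeplitz diagonals is the same thing as averaging the gauge orbit against $F_n$, since once this is in place the norm bound is forced by the isometry of $\alpha_x$ and the fundamental theorem of calculus, mirroring the argument for $\|f-S_n(R_n(f))\|$. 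A minor point to verify is that the operator-valued integral is well defined (immediate in finite dimensions) and that $\alpha_x$ indeed maps $\Toep{n}$ into itself.
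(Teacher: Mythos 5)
Your proof is correct, but it takes a genuinely different route from the paper. The paper's argument is matrix-theoretic: it writes out the entries of $T - R_n(S_n(T))$ as $\tfrac{|k-l|}{n}a_{k-l}$, recognizes this as the Schur product $\tfrac1n (T_n - T_n^*)\odot [D,T]$ with the triangular truncation matrix $T_n$, and then invokes the nontrivial Hadamard-multiplier bound $\|T_n\|_{cb}\leq 1+\tfrac1\pi(1+\log n)$ of Angelos--Cowen--Narayan to obtain the explicit rate $\gamma_n' = \tfrac2n\bigl(1+\tfrac1\pi(1+\log n)\bigr)$. You instead prove the integral representation $R_n(S_n(T)) = \tfrac{1}{2\pi}\int_{-\pi}^{\pi}F_n(x)\,\alpha_x(T)\,dx$ (which is correct: the $(k,l)$ entry of the right-hand side is $\widehat{F_n}(k-l)\,a_{k-l} = (1-\tfrac{|k-l|}{n})a_{k-l}$), bound $\|T-\alpha_x(T)\|\leq |x|\,\|[PDP,T]\|$ via the fundamental theorem of calculus and the isometry of conjugation by $e^{ixPDP}$, and conclude with the approximate-identity property of the Fej\'er kernel. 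Your approach buys conceptual economy: it makes Lemma \ref{lma:est-T} literally the same estimate as Lemma \ref{lma:est-f} with the identical constant $\gamma_n'=\gamma_n=\tfrac{1}{2\pi}\int F_n(x)|x|\,dx$, it avoids Schur-multiplier theory and the external citation entirely, and it parallels the convolution/Berezin-transform picture used later for the sphere. What the paper's approach buys is an immediate closed-form rate $O(\log n/n)$; your constant also decays at that rate, but your proof as written only establishes convergence to zero, so a quantitative statement would require a further (standard) estimate of the Fej\'er integral.
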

\begin{proof}
Write $T = PgP$ for $g = \sum_k b_k e^{ik x}$. Then the matrix coefficients of the Toeplitz matrix $T- R_n(S_n(T))$ are given by
\begin{align*}
(T_{kl} - R_n(S_n(T))_{kl}) &= \begin{pmatrix} b_{k-l}\end{pmatrix} - \begin{pmatrix}1-\frac{|k-l|}{n} b_{k-l} \end{pmatrix} =  \begin{pmatrix}\frac{|k-l|}{n} b_{k-l} \end{pmatrix} \\
&=  (T_n - T_n^*) \odot \begin{pmatrix} \frac{k-l}{n} b_{k-l} \end{pmatrix} \\
& =\frac{1}{n}  (T_n - T_n^*) \odot \begin{pmatrix}[D,T] \end{pmatrix} 
\end{align*}
in terms of the Schur product $\odot$ with $T_n$ and $T_n^*$ where 
$$
T_n =\begin{pmatrix} 1 & 0 & \cdots &0 \\
1 & 1 & \cdots & 0 \\
\vdots & \vdots & \ddots & \vdots\\
1 & 1 & \cdots & 1 
\end{pmatrix}.
$$
Now the norm of the map $A \mapsto T_n \odot A$ for $A \in M_{n}(\C)$ coincides with $\| T_n \|_{cb}$ (cf. \cite[Chapter 8]{Pau02}). In \cite[Theorem 1]{ACN92} the following estimate for this norm was derived: 
$$
\| T_n \|_{cb} \leq  \left ( 1+ \frac 1 \pi ( 1 + \log (n))\right).
$$
Hence we have
\begin{align*}
\| T- R_n(S_n(T)) \|& \leq \frac{2}{n} \| T_n \|_{cb} \| [D,T] \| \leq \gamma_n' \|[D,T]\|
\end{align*}
where $\gamma_n' := \frac{2}{n} \left ( 1+ \frac 1 \pi ( 1 + \log (n))\right)$. It is clear that $\gamma_n' \to 0$ as $n \to \infty$.
\end{proof}

Thus we find that the pair of maps $(R_n,S_n)$ for $\{ (\Toep{n}, P_n L^2(S^1), P_n D P_n)\}_n $ and $(C^\infty(S^1), L^2(S^1), D)$ forms a $C^1$-approximate order isomorphism. We may conclude from Theorem \ref{thm:GH-conv} that
\begin{prop}
  \label{prop:gh-toep}
The sequence of state spaces $\{ (\cS(\Toep{n} ),d_n)\}_n$ converges to $(\cS(C(S^1)),d)$ in Gromov--Hausdorff distance. 
\end{prop}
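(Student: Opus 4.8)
The plan is to obtain this proposition as a direct application of Theorem \ref{thm:GH-conv} to the pair of operator system spectral triples $(\Toep{n}, P_n L^2(S^1), P_n D P_n)$ and $(C^\infty(S^1), L^2(S^1), D)$, using the compression $R_n$ and the Fej\'er averaging $S_n$ as the candidate $C^1$-approximate order isomorphism. Two hypotheses must be checked: that the metric topologies on $\cS(\Toep{n})$ and $\cS(C(S^1))$ coincide with their weak-$*$ topologies, and that $(R_n,S_n)$ is genuinely a $C^1$-approximate order isomorphism in the sense of Definition \ref{defn:GH-conv}. Once both are in hand the conclusion is immediate.

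For the second hypothesis I would assemble the four lemmas proved above together with the elementary positivity and unitality of the two maps. The map $R_n(f) = PfP$ is a compression, hence positive, and sends $1$ to $P$, which is the unit of $\Toep{n}$; the map $S_n(T)(x) = \tr\left(\ket{\psi}\bra{\psi}\alpha_x(T)\right)$ is the integral of $\alpha_x(T)$ against the state $\ket{\psi}\bra{\psi}$, hence positive and satisfying $S_n(1)=1$. The two approximate-inverse estimates required by condition (2) of Definition \ref{defn:GH-conv} are exactly Lemma \ref{lma:est-f} (providing $\gamma_n \to 0$, since the Fej\'er kernels form an approximate identity) and Lemma \ref{lma:est-T} (providing $\gamma_n' \to 0$). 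The $C^1$-contractivity of $R_n$ and $S_n$, that is, contractivity for both the operator norm and the Lipschitz semi-norm, is supplied by Lemmas \ref{lma:est-comm-f} and \ref{lma:est-comm-T} respectively. Thus $(R_n,S_n)$ satisfies every clause of the definition.

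For the first hypothesis I would argue the two sides separately. On the truncation side, $\Toep{n}$ is finite dimensional and $\ker[D,\cdot] = \C 1$: a Toeplitz matrix commuting with the diagonal $P_n D P_n = \diag(1,\ldots,n)$ must be a scalar, since $[P_nDP_n,T]_{kl} = (k-l)a_{k-l}$ vanishes identically only when $T=a_0 I$. Hence the set $\{T : \|[D,T]\|\leq 1\}/\C 1$ is bounded in a finite-dimensional space, so totally bounded, and \cite[Theorem 1.8]{Rie98} gives agreement of the $d_{E_n}$-topology with the weak-$*$ topology. On the commutative side the corresponding set is the unit Lipschitz ball modulo constants; identifying $\|[D,f]\|$ with the Lipschitz constant of $f$, Arzel\`a--Ascoli shows this family is equicontinuous and uniformly bounded, hence totally bounded in $C(S^1)$, so the same criterion applies (this is the commutative case already treated in \cite{C89}).

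With both hypotheses verified, Theorem \ref{thm:GH-conv} yields the stated Gromov--Hausdorff convergence. I do not expect a genuine obstacle at the level of this proposition, since the substantive work has been front-loaded into the lemmas, most notably the logarithmic Schur-multiplier bound $\|T_n\|_{\cb} = O(\log n)$ underlying Lemma \ref{lma:est-T}, whose $n^{-1}$ prefactor is what forces $\gamma_n' \to 0$. The one point outside the lemmas that deserves care is the topology hypothesis for the commutative limit, where one must supply the Arzel\`a--Ascoli argument rather than the finite-dimensionality shortcut available for the truncations.
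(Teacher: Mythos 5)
Your proposal is correct and follows exactly the paper's route: the paper likewise obtains Proposition \ref{prop:gh-toep} by combining Lemmas \ref{lma:est-comm-f}--\ref{lma:est-T} into the statement that $(R_n,S_n)$ is a $C^1$-approximate order isomorphism and then invoking Theorem \ref{thm:GH-conv}, with the weak-$*$ topology hypothesis handled by the remark on finite-dimensional systems with $\ker[D,\cdot]=\C 1$ and by citing \cite{C89} for the commutative limit. Your added verifications (positivity and unitality of $R_n,S_n$, the computation $[P_nDP_n,T]_{kl}=(k-l)a_{k-l}$, and the Arzel\`a--Ascoli argument) are precisely the details the paper leaves tacit, and they are all sound.
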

Using a simple Python script we have computed the distance function for states on $\Toep{n}$ of the form $S_n^* ({\rm ev}_x)$ for $n=3,5,9$, where ${\rm ev}_x$ is the pure state on $C(S^1)$ given by evaluation at $x$. The optimization problem for computing the distance has been solved numerically using the standard {\em sequential least squares programming} (SLSQP) method and we claim absolutely no originality or proficiency here. We have illustrated the numerical results in Figure \ref{fig:dist-toep}.

\begin{figure}
  \includegraphics[scale=.6]{./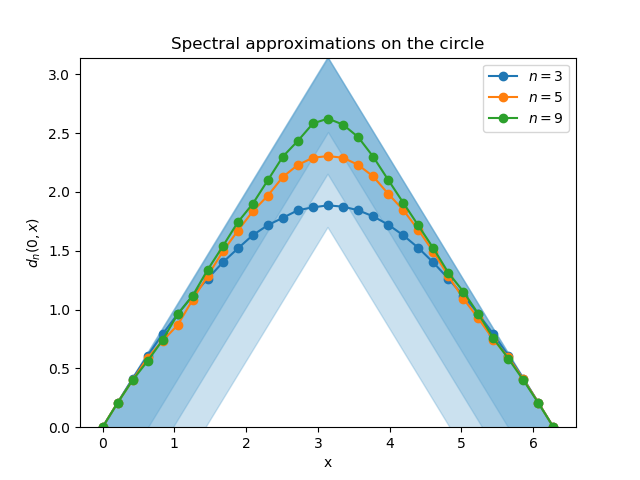}
  \caption{
    The distance function $d_n(0,x) \equiv d_n(0,S_n^*({\rm ev}_x))$ on the Toeplitz operator system (Proposition \ref{prop:gh-toep}) for $n=3,5,9$. The blue band corresponds to the lower bounds $d(0,x) - 2 \gamma_n$ given in Proposition \ref{prop:estimates-dist} with the constants $\gamma_n$ given in Lemma \ref{lma:est-f}.}
  \label{fig:dist-toep}
\end{figure}

\subsection{Fej\'er--Riesz operator systems converge to the circle}

In \cite{CS20} we found the dual operator system of $\Toep{n}$ to be equal to the operator system of functions on $S^1$ with only a finite number of non-zero Fourier coefficients. It gives a different type of truncation, this time taking place at the level of the function algebra, as opposed to a spectral truncation in Hilbert space. 

More precisely, we will consider the so-called {\em Fej\'er--Riesz operator system}:
\begin{equation}
  \label{eq:CstZn}
  \CZ{n} = \left\{ a = (a_k)_{k \in \Z}: \text{supp}( a) \subset (-n,n) \right\}.
\end{equation}
The elements in $\CZ{n}$ are thus given by sequences with finite support of the form
$$
a = (\ldots, 0 ,a_{-n+1},a_{-n+2},\ldots, a_{-1}, a_0, a_1, \ldots, a_{n-2},a_{n-1},0,\ldots)
$$
and this allows to view $\CZ{n}$ as an operator subsystem of $C^*(\Z) \cong C(S^1)$.

The adjoint $a \mapsto a^*$ is given by $a^*_k = \overline a_{-k}$ 
and an element $a \in \CZ{n}$ is positive iff $\sum_k a_k e^{ikx}$ defines a positive function on $S^1$.

Since this naturally is an operator subsystem of $C(S^1)$ it is natural to consider the following spectral triple: 

\begin{equation}
\left(\CZ{n} , \H = L^2(S^1),D = - i \frac{d}{dx} \right).
\end{equation}

We will be looking for positive and contractive maps $K_n: C(S^1) \to \CZ{n}$ and $L_n: \CZ{n} \to C(S^1)$ satisfying the conditions of Definition \ref{defn:GH-conv} so that we can apply Theorem \ref{thm:GH-conv} to conclude Gromov--Hausdorff convergence of the corresponding state spaces.

We introduce
\begin{align*}
  K_n : C(S^1) &\to \CZ{n}\\
  f &\mapsto F_n \ast f 
  \intertext{where we recall that $F_n = \sum_{|k| \leq n-1} (1-|k|/n ) e^{ikx}$ is the Fej\'er kernel so that $K_n$ indeed maps to $\CZ{n}$ considered as an operator subsystem of $C(S^1)$. The map $L_n$ is simply the linear embedding of $\CZ{n}$ as an operator subsystem of $C^*(\Z) \cong C(S^1)$:
}
  L_n : \CZ{n} &\to C(S^1)\\
  (a_k) &\mapsto \left( x \mapsto \sum_k a_k e^{ik x} \right).
\end{align*}
Positivity and contractiveness of $K_n$ for the norm and Lipschitz norm is an easy consequence of the good kernel properties of $F_n$ while for $L_n$ they are trivially satisfied. 

\begin{lma}
  \label{lma:est-f-FR}
  There exists a sequence $\gamma_n$ converging to 0 such that
  $$
\| L_n \circ K_n(f) - f \| \leq \gamma_n \| [D,f]\|
$$
for all $f \in C^\infty(S^1)$. 
  \end{lma}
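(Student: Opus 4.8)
The plan is to recognize that the composite $L_n \circ K_n$ reproduces precisely the Fej\'er-smoothing operation that already appeared in the Toeplitz setting, so that this lemma is in essence a restatement of Lemma \ref{lma:est-f}. First I would compute the composite explicitly. Writing the Fourier coefficients of $f$ as $a_k$, the map $K_n(f) = F_n * f$ produces the element of $\CZ{n}$ with coefficients $(1 - |k|/n)\, a_k$ supported in $(-n,n)$; applying the embedding $L_n$ then simply reads these coefficients back off as a function on $S^1$, so that
$$
L_n \circ K_n(f)(x) = \sum_{k=-n+1}^{n-1}\left(1 - \frac{|k|}{n}\right) a_k\, e^{ikx} = (F_n * f)(x).
$$
This is exactly the quantity $S_n(R_n(f))$ from Proposition \ref{prop:adjoint-HS}, and therefore the bound we seek is literally the content of Lemma \ref{lma:est-f}.

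For completeness I would reproduce the short estimate rather than merely cite it. Using that $\|[D,f]\|$ equals the Lipschitz constant of $f$ (so that $|f(x) - f(z)| \leq |x-z|\,\|[D,f]\|$) together with the normalization $\frac{1}{2\pi}\int_{-\pi}^\pi F_n(y)\, dy = 1$, one writes $f(x) - (F_n * f)(x)$ as the average of $f(x) - f(x-y)$ against $F_n$ and bounds it by
$$
|f(x) - (F_n * f)(x)| \leq \frac{1}{2\pi}\int_{-\pi}^\pi F_n(y)\,|y|\, dy \cdot \|[D,f]\| =: \gamma_n\, \|[D,f]\|.
$$
Taking the supremum over $x$ gives the claimed sup-norm bound, and the good-kernel properties of the Fej\'er kernels ({\em cf.} \cite[Theorem 4.1 and Lemma 5.1]{SS03}) guarantee $\gamma_n \to 0$. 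Crucially, this is the \emph{same} sequence $\gamma_n$ already produced in Lemma \ref{lma:est-f}.

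There is no substantive obstacle in this lemma: once the identification $L_n \circ K_n = S_n \circ R_n$ is made, the argument goes through verbatim. The only point demanding a little care is the bookkeeping that the embedding $L_n$ does not alter the coefficients computed by $K_n$, so that the composite really does coincide with convolution against $F_n$ and not some rescaled variant. After that observation the estimate is purely a matter of approximate-identity Fourier analysis, identical to the Toeplitz case.
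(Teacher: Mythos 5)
Your proposal is correct and follows exactly the paper's route: the paper's proof is the one-line observation that $L_n \circc K_n(f) = F_n \ast f$, after which it declares the argument analogous to Lemma \ref{lma:est-f}, whose Lipschitz-plus-good-kernel estimate you reproduce faithfully (including the same constants $\gamma_n = \frac{1}{2\pi}\int_{-\pi}^{\pi} F_n(y)\,|y|\,dy$). Your only addition is spelling out the coefficient bookkeeping for $L_n \circ K_n$, which the paper leaves implicit.
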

\proof
Since $L_n \circ K_n(f) = F_n \ast f$ the proof is analogous to that of Lemma \ref{lma:est-f}.
\endproof

\begin{lma}
  There exists a sequence $\gamma_n'$ converging to 0 such that
  $$
\| K_n \circ L_n(a) - a \| \leq \gamma_n' \| [D,a]\|
$$
for all $a \in \CZ{n}$.
  \end{lma}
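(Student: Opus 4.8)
The plan is to compute $K_n \circ L_n$ explicitly on the level of Fourier coefficients and then to recognise the resulting error as a fixed Fourier multiplier applied to $[D,a]$, whose operator norm on band-limited functions grows only logarithmically. First I would record that for $a = (a_k) \in \CZ{n}$ the embedded function is $L_n(a) = \sum_{|k|<n} a_k e^{ikx}$, and that convolving with the Fej\'er kernel multiplies the frequency-$k$ component by $1 - |k|/n$. Hence
$$
K_n \circ L_n(a) = \sum_{|k|<n} \Big(1 - \tfrac{|k|}{n}\Big) a_k e^{ikx},
$$
so that the error $K_n \circ L_n(a) - a$ is the function whose Fourier coefficients are $-\tfrac{|k|}{n}\,a_k$.

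The second step is to rewrite this error in terms of $[D,a]$. Since $[D,a]$ is the function with Fourier coefficients $k a_k$, and since $-\tfrac{|k|}{n}\,a_k = -\tfrac{1}{n}\operatorname{sgn}(k)\,(k a_k)$ (with the convention $\operatorname{sgn}(0)=0$, consistent with the vanishing constant term of $[D,a]$), we obtain
$$
K_n \circ L_n(a) - a = -\frac{1}{n}\, M\big([D,a]\big),
$$
where $M$ is the Fourier multiplier sending the frequency-$k$ component to $\operatorname{sgn}(k)$ times itself. This is the exact analogue, in the present ``dual'' Fej\'er--Riesz picture, of the triangular truncation $T_n - T_n^*$ that appeared in the proof of Lemma \ref{lma:est-T}.

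The decisive step, and the main obstacle, is to bound $M$ on band-limited functions. Up to the factor $i$, the multiplier $M$ is the classical conjugate-function operator (equivalently $M = 2\Pi_+ - \mathrm{id}$ on functions of vanishing mean, where $\Pi_+$ denotes the Riesz projection onto non-negative frequencies), which is notoriously \emph{unbounded} on $C(S^1)$. The point that saves us is that $[D,a]$ is a trigonometric polynomial of degree at most $n-1$, and the norm of $M$ restricted to such polynomials grows only like $\log n$. This is the standard logarithmic growth of the Lebesgue constants / Riesz projection on polynomials, and it is precisely the Fourier-side counterpart of the estimate $\|T_n\|_{\cb} \leq 1 + \tfrac1\pi(1+\log n)$ from \cite{ACN92} used in Lemma \ref{lma:est-T}; one could in fact derive it either by the classical Lebesgue-constant computation or by transporting the triangular-truncation bound to the Laurent representation on $\ell^2(\Z)$.

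Combining these, $\|K_n \circ L_n(a) - a\| = \tfrac1n \|M([D,a])\| \leq \tfrac{C(1+\log n)}{n}\,\|[D,a]\|$ for a suitable constant $C$, so that $\gamma_n' := C(1+\log n)/n$ works and tends to $0$ as $n \to \infty$. I expect the only genuine effort to lie in pinning down the logarithmic bound on $M$; everything else is bookkeeping with Fourier coefficients running exactly parallel to the Toeplitz computation.
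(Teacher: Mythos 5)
Your proposal is correct, and up to the last step it follows exactly the paper's decomposition: the paper also computes $K_n\circ L_n(a)-a=\bigl(-\tfrac{|k|}{n}a_k\bigr)_k$ and writes this function as a convolution $g\ast h$ with $g=\sum_{k=-n+1}^{n-1}\mathrm{sgn}(k)e^{ikx}$ (your multiplier $M$) and $h=\tfrac1n[D,a]$. Where you diverge is the decisive norm estimate. The paper bounds the kernel crudely but elementarily: $\|g\ast h\|_\infty\le\|g\|_1\|h\|_\infty$ and $\|g\|_1\le\|g\|_2=\sqrt{2n-1}$ by Cauchy--Schwarz and Parseval, giving $\gamma_n'=\sqrt{2n-1}/n=O(n^{-1/2})$ in two lines with no input beyond H\"older. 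You instead invoke the logarithmic bound on the conjugate-function operator restricted to trigonometric polynomials of degree at most $n-1$, equivalently the bound $\|\tilde D_{n-1}\|_{L^1}=O(\log n)$ for the conjugate Dirichlet kernel $\tilde D_{n-1}(x)=2\sum_{k=1}^{n-1}\sin kx$, which is the same kernel $g$ up to a factor of $i$. This is a classical fact (conjugate Lebesgue constants), and it yields the sharper rate $\gamma_n'=O(\log n/n)$, matching the rate obtained in the Toeplitz case via the triangular-truncation bound of \cite{ACN92}; your observation that the two estimates are matrix/Fourier counterparts of one another is apt. The trade-off is that your route rests on a standard-but-unproved classical estimate (which you correctly flag as the only real work), whereas the paper's Cauchy--Schwarz bound is weaker ($n^{-1/2}$ versus $\log n/n$) but entirely self-contained; since the lemma only requires $\gamma_n'\to 0$, both suffice, and your version gives the quantitatively better constant for the error bands in the spirit of Figure \ref{fig:dist-FR}.
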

\proof
From the Fourier coefficients of the Fej\'er kernel we find that
$$
K_n \circ L_n(a) -a = \left(- \frac{|k|}{n} a_k \right)_k.
$$
We will estimate the sup-norm of the function $f(x) = \frac 1n \sum_k |k|a_k  e^{ik x}$ by the Lipschitz norm of $a$. First of all, we may write $f$ as a convolution product $f = g \ast h$ where $g = \sum_{k=-n+1}^{n-1} \text{sgn}(k) e^{ik x}$ and $h = \frac 1n \sum_{k=-n+1}^{n-1} k a_k  e^{ik x} = \frac 1 n [D,a]$. Then $\| f \|_\infty \leq \| g \|_1 \| h \|_\infty$ where
$$
\| g\|_1 \leq \| g \|_2 = \sqrt{ 2n-1} .
$$
We conclude that $\| g \ast h \|_\infty \leq \gamma'_n \| [D,a]\|_\infty$ with $\gamma_n' = \frac{\sqrt{2n-1}}{n} \to 0$ as $n \to \infty$. 
\endproof

We conclude that the pair of maps $(K_n,L_n)$ for $\{ (\CZ{n},L^2(S^1),D)\}_n$ and $(C^\infty(S^1),L^2(S^1),D)$ forms a $C^1$-approximate order isomorphism and we have 
\begin{prop}
  \label{prop:gh-FR}
The sequence of state spaces $\{ (\cS(\CZ{n}),d_n)\}_n$ converges to $(\cS(C(S^1)),d)$ in Gromov--Hausdorff distance. 
\end{prop}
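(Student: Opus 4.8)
\proof
The plan is to certify that the pair $(K_n, L_n)$ is a $C^1$-approximate order isomorphism for $\{(\CZ{n}, L^2(S^1), D)\}_n$ and $(C^\infty(S^1), L^2(S^1), D)$ in the sense of Definition \ref{defn:GH-conv}, after which Theorem \ref{thm:GH-conv} applies directly. I would first dispatch condition (1). Positivity and unitality of $K_n$ follow from the good-kernel properties of the Fej\'er kernel: since $F_n \geq 0$ one has $F_n \ast f \geq 0$ whenever $f \geq 0$, and since $\frac{1}{2\pi}\int_{-\pi}^\pi F_n = 1$ one gets $K_n(1) = 1$. For $L_n$ both properties are immediate, as it is the inclusion of $\CZ{n}$ as an operator subsystem of $C(S^1)$, sending positive elements to positive functions and the unit to the constant function $1$.

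Next I would verify $C^1$-contractivity. From $\| F_n \|_1 = 1$ and Young's inequality, $\| K_n(f) \| = \| F_n \ast f \|_\infty \leq \| f \|_\infty$, so $K_n$ is contractive in operator norm; and since convolution commutes with $D = -i\,d/dx$ one has $[D, K_n(f)] = F_n \ast [D,f]$, whence the same estimate gives $\| [D, K_n(f)] \| \leq \| [D,f] \|$, i.e. Lipschitz contractivity. The embedding $L_n$ is isometric for both the operator norm and the Lipschitz seminorm, hence trivially $C^1$-contractive.

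Condition (2) is then supplied by the two estimate lemmas already proved: Lemma \ref{lma:est-f-FR} gives $\| L_n \circ K_n(f) - f \| \leq \gamma_n \| [D,f] \|$, and the subsequent lemma gives $\| K_n \circ L_n(a) - a \| \leq \gamma_n' \| [D,a] \|$ with $\gamma_n' = \sqrt{2n-1}/n$, where both $\gamma_n, \gamma_n' \to 0$. This completes the verification that $(K_n, L_n)$ is a $C^1$-approximate order isomorphism.

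It remains to confirm the standing topological hypothesis of Theorem \ref{thm:GH-conv}: for $C(S^1)$ the $d$-topology on $\cS(C(S^1))$ agrees with the weak-$*$ topology by the commutative case of Connes' theorem, while each $\CZ{n}$ is finite-dimensional with $\ker[D,\cdot] = \C 1$, so the $d_n$-topology agrees with the weak-$*$ topology by the remark following Theorem \ref{thm:GH-conv}. With these in hand, Theorem \ref{thm:GH-conv} yields the asserted Gromov--Hausdorff convergence. The genuine mathematical content sits in the two estimate lemmas rather than in this assembly; in particular, the harder of the two is the second, where bounding $\| K_n \circ L_n(a) - a \|$ by the Lipschitz seminorm rests on writing the offending function as a convolution $g \ast h$ with $g = \sum_{k} \mathrm{sgn}(k) e^{ikx}$ and controlling $\| g \|_1 \leq \| g \|_2 = \sqrt{2n-1}$. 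The proposition itself is then just the packaging of these facts.
\endproof
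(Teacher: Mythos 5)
Your proposal is correct and follows essentially the same route as the paper: the pair $(K_n,L_n)$, positivity and $C^1$-contractivity via the good-kernel properties of $F_n$ and the isometric embedding, the two estimate lemmas for condition (2), and then Theorem \ref{thm:GH-conv}. The only difference is that you spell out details the paper leaves implicit (unitality, the identity $[D,K_n(f)]=F_n\ast[D,f]$, and the weak-$*$ topology hypothesis, which the paper handles in a standing remark), which is fine.
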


We again illustrate the numerical results for the first few cases in Figure \ref{fig:dist-FR}. As compared to the Toeplitz operator system (Figure \ref{fig:dist-toep}) the optimization is much more cumbersome. This is essentially due to the fact that it involves the computation of a supremum norm of a trigonometric polynomial.

\begin{rem}
If we recall the duality between $\Toep{n}$ and $\CZ{n}$ as operator systems from \cite{CS20} it is quite surprising that both operator system spectral triples converge to the circle as $n \to \infty$.
\end{rem}

\begin{figure}
  \includegraphics[scale=.6]{./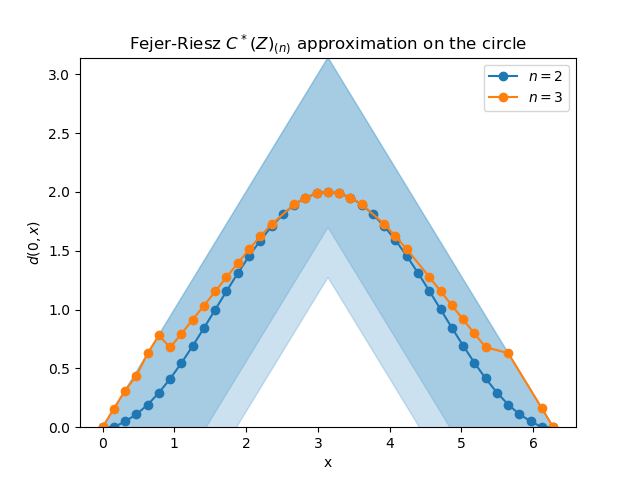}
  \caption{
    The distance function $d_n(0,x) \equiv d_n(0,L_n^* {\rm ev}_x)$ on the Fej\'er--Riesz operator system (Proposition \ref{prop:gh-FR}) for $n=2,3$. The blue band corresponds to the lower bounds $d(0,x) - 2 \gamma_n$ given in Proposition \ref{prop:estimates-dist} with the constants $\gamma_N$ given in Lemma \ref{lma:est-f-FR}.}
  \label{fig:dist-FR}
\end{figure}

\subsection{Matrix algebras converge to the sphere}
In \cite{Rie00,Rie02} Rieffel analyzed quantum Gromov--Hausdorff convergence for so-called quantum metric spaces. Such a space is given by a pair $(A,L)$ of an order-unit space $A$ and a so-called Lipschitz norm $L$ on $A$. At first sight, such spaces appear to be more general than (operator system) spectral triples and the distance function they give rise to. However, as Rieffel shows in \cite[Appendix 2]{Rie00} Dirac operators are universal in the sense that the Lipschitz semi-norms can always be realized as norms of commutators with a self-adjoint operator $D$.  Note that it remains an open question, however, for what Lipschitz semi-norms one can find an operator $D$ with compact resolvent implementing that semi-norm.

We show below that the corresponding state spaces with Connes' distance formula converge in Gromov--Hausdorff distance to the state space on the round two-sphere. This is closely connected ---certainly at the technical level--- to the  results of \cite{Rie02} which is that the matrix algebras that describe the fuzzy two-sphere converge in quantum Gromov--Hausdorff distance to the round two-sphere. Even though for much of the analysis we may refer to \cite{Rie00,Rie02} we do formulate the main results in our framework of operator system spectral triples and, as said, restrict our attention to the classical metric spaces.

\bigskip

We will describe the round two-sphere by the following spectral triple:
\begin{equation}
\label{eq:spectral-triple-S2}
(C^\infty(S^2),\C^2 \otimes L^2(S^2), D_{S^2})
\end{equation}
We write $S^2= \{ (x_1,x_2,x_3) \in \R^3: x_1^2 + x_2^2+x_3^2 = 1\}$ so that the following vector fields
$$
X_{jk} = x_j \partial_k - x_k \partial_j ; \qquad (j<k). 
$$
are tangent to $S^2$. Of course, these vector fields are fundamental vector fields and generate the Lie algebra $su(2)$. Note that the normal vector field is given by $\vec{x}$ itself. 

In terms of the three Pauli matrices we may then write the Dirac operator as \cite{Tra92}
\begin{equation}
\label{eq:dirac-S2}
D_{S^2} = (\vec x \cdot \vec \sigma) \sum_{j<k} \sigma^j \sigma^k \otimes X_{jk}. 
\end{equation}
Note that $(\vec x \cdot \vec \sigma) := \sum_{k=1}^3 x_k \sigma_k$ acts as the chirality operator and makes sure that the spinor bundle on $S^2$ is actually non-trivial (as it should).

The {\em fuzzy sphere} \cite{Mad92} is obtained when one considers spherical harmonics on the sphere only up to some maximum total spin. More precisely, it is described by a matrix algebra $L(V_n)$ where $V_n$ is the $n$-dimensional irreducible representation of $SU(2)$. A Dirac operator on the fuzzy sphere was introduced in \cite{GP95} (see also \cite{Bar15}):
\begin{equation}
\label{eq:dirac-fuzzy-S2}
D_n := \sum_{j<k} \sigma^j \sigma^k \otimes [L_{jk}, \cdot ]
\end{equation}
where $L_{jk}$ are standard generators of $su(2)$ in the $n$-dimensional representation, satisfying
$$
[L_{jk},L_{lm} ] = \delta_{kl} L_{jm} - \delta_{km} L_{jl} - \delta_{jl} L_{km} + \delta_{jm} L_{kl}.
$$
This  gives rise to the following spectral triple
\begin{equation}
\label{eq:spectral-triple-fuzzy-S2}
(L(V_n), \C^2 \otimes L(V_n) , D_n)
\end{equation}
The comparison between \eqref{eq:dirac-S2} and \eqref{eq:dirac-fuzzy-S2} is convincing, except for the absence of the chirality operator in the case of the fuzzy sphere. However, as shown in \cite{Bar15} this can be repaired for by a doubling of the representation space and a corresponding doubling constructing for the Dirac operator. Note that this does not alter the corrresponding Lipschitz norms, so we may just as well work with the Dirac operator defined in \eqref{eq:dirac-fuzzy-S2}.

\begin{rem}
The paper \cite{Bar15} also contains a detailed discussion on the nature of the spectral truncation that applies to the case at hand (see \cite[Section 6.3]{Bar15}). It depends on the decomposition of the Hilbert space of spinors into irreducible representations of $\text{Spin}(3)$. However, since we will not need the specific form of the truncation here, we refrain from including it here. 
\end{rem}

Let us now proceed to show that there is a $C^1$-approximate order isomorphism $(\breve\sigma,\sigma)$ 
for the sequence of spectral triples defined in \eqref{eq:spectral-triple-fuzzy-S2} and the spectral triple of \eqref{eq:spectral-triple-S2}. As a consequence, we thus re-establish part of the conclusion of \cite[Theorem 3.2]{Rie02} that the fuzzy sphere converges to the two-sphere in Gromov--Hausdorff distance as $n \to \infty$. Note that {\em loc.cit.} goes further in establishing that this limit is the unique limit of the sequence of fuzzy spheres (and also extends to more general coadjoint orbits). The reason we have included this example here is that it is formulated entirely in terms of spectral triples, and fits the general framework set up in Section \ref{sect:gh}.

\subsubsection{Berezin symbol and Berezin quantization}
Following \cite{Rie02} we start by defining maps $\sigma:L(V_n) \to C(S^2)$ and $\breve \sigma : C(S^2) \to L(V_n)$. Given a projection $P \in L(V_n)$, say, on the highest-weight vector of $V_n$, we define the {\em Berezin symbol} $\sigma: L(V_n) \to C(S^2)$ by \cite{Ber75}
\begin{equation}
\sigma(T)(g) \equiv \sigma_T(g) := \tr ( T \alpha_g (P)) 
\label{eq:berezin-symbol}
\end{equation}
where $\alpha_g$ is the action of $g \in SU(2)$ induced by conjugation on $L(V)$. Since $\alpha_u (P) = P$ for all $u \in U(1)$, it follows that $\sigma(T)$ is $U(1)$-invariant and thus descends to a function on $SU(2)/U(1)= S^2$. Moreover, we readily see that $\sigma$ is an $SU(2)$-equivariant map which will turn out to be useful later.

We let $\breve \sigma: C(S^2) \to L(V_n)$ be the adjoint of the map $\sigma$ when $C(S^2)$ comes equipped with the $L^2$-inner product and $L(V_n)$ with the Hilbert--Schmidt inner product. There is also the following explicit expression ({\em cf.} \cite[Sect.2]{Rie02}). 
\begin{prop}
\label{prop:berezin}
The map $\breve \sigma$ defined by $\breve \sigma(f) \equiv \breve \sigma_f= n \int f(g) \alpha_g(P) dg$ satisfies 
$$
\langle f, \sigma_T \rangle = \frac{1}{n} \tr (\breve \sigma_f T).
$$
Moreover, we may write the so-called {\em Berezin transform} as a convolution product
$$
\sigma(\breve \sigma_f)(g) = (f \ast H_P)(g) \equiv \int f(g h^{-1}) H_P(h) dh
$$
where $H_P$ is a probability measure defined by
$$
H_P(g) = n \tr (P \alpha_g(P)).
$$
\end{prop}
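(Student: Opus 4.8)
The plan is to dispatch the three assertions in turn, each reducing to an interchange of the group average with the trace, followed by the $SU(2)$-equivariance of the conjugation action $\alpha$. First I would verify the explicit formula for $\breve\sigma_f$. Both $\sigma$ and the candidate $\breve\sigma_f = n\int f(g)\alpha_g(P)\,dg$ are linear, and the integral is an average against the (finite, hence normalizable) Haar measure, so Fubini lets me pull the trace inside. Using $P=P^*$ (so that $\alpha_g(P)$ is self-adjoint) and cyclicity of the trace,
\[
\frac 1n \tr(\breve\sigma_f T) = \int f(g)\,\tr(\alpha_g(P)\,T)\,dg = \int f(g)\,\tr(T\,\alpha_g(P))\,dg = \int f(g)\,\sigma_T(g)\,dg,
\]
which is exactly the $L^2$-pairing $\langle f,\sigma_T\rangle$ on $S^2=SU(2)/U(1)$ (up to the usual bookkeeping of which slot carries the complex conjugate, and recalling that $\alpha_u(P)=P$ for $u\in U(1)$ lets everything descend to $S^2$). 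This simultaneously exhibits $\breve\sigma_f$ as the adjoint of $\sigma$ and proves the first identity.

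Next, for the Berezin transform I would substitute the explicit $\breve\sigma_f$ into $\sigma(\breve\sigma_f)(g)=\tr(\breve\sigma_f\,\alpha_g(P))$ and again exchange integral and trace, reaching $n\int f(h)\,\tr(\alpha_h(P)\,\alpha_g(P))\,dh$. The key step is that $\alpha$ is a genuine conjugation action, so $\alpha_{g^{-1}}$ is trace-preserving and multiplicative; applying it to the integrand turns $\tr(\alpha_h(P)\alpha_g(P))$ into $\tr(\alpha_{g^{-1}h}(P)\,P)=\tr(P\,\alpha_{g^{-1}h}(P))=\tfrac1n H_P(g^{-1}h)$. A change of variables in the inversion-invariant Haar measure, together with the symmetry $H_P(h^{-1})=H_P(h)$ —which follows from the same equivariance trick— then rewrites the result as $\int f(gh^{-1})H_P(h)\,dh=(f\ast H_P)(g)$, as claimed.

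Finally I would check that $H_P$ is a probability measure. Positivity is immediate: by $P^2=P$ and cyclicity, $\tr(P\alpha_g(P))=\tr\bigl(P\,\alpha_g(P)\,P\bigr)\ge 0$, since $P\alpha_g(P)P$ is a positive operator. For the total mass I would observe that $\int\alpha_g(P)\,dg$ is an $SU(2)$-invariant element of $L(V_n)$, hence by Schur's lemma (irreducibility of $V_n$) a scalar multiple of the identity; comparing traces and using $\tr P=1$ forces $\int\alpha_g(P)\,dg=\tfrac1n\id$, whence $\int H_P(g)\,dg=n\,\tr\!\bigl(P\cdot\tfrac1n\id\bigr)=\tr P=1$. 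The computations are routine throughout; the only points needing genuine care are the inversion bookkeeping distinguishing $g^{-1}h$ from $gh^{-1}$ in the convolution, and the appeal to Schur's lemma to evaluate $\int\alpha_g(P)\,dg$ —the irreducibility input from which the normalization by $n$ ultimately originates— so I expect no serious obstacle beyond keeping these conventions straight.
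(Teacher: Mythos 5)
Your proof is correct and follows essentially the same route as the paper: both identities are obtained by exchanging the trace with the Haar integral and then using cyclicity/conjugation-invariance of the trace together with the symmetry $H_P(h^{-1})=H_P(h)$ and a change of variables in the Haar measure. The only difference is that you additionally verify that $H_P$ is a probability measure (positivity from $P\alpha_g(P)P\geq 0$, and total mass one via Schur's lemma applied to $\int \alpha_g(P)\,dg$), a point the paper's proof leaves implicit.
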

\proof
As in \cite[Sect.2]{Rie02} we check the formula for $\breve \sigma(f)$ by computing that 
$$
\langle f, \sigma_T \rangle = \int f(g) \tr( T \alpha_g(P)) dg =\tr \left( \int f(g)  T \alpha_g(P) dg  \right)
$$
so that the result follows. 

For the Berezin transform we then indeed have that
\begin{align*}
\sigma(\breve \sigma_f)(g) &= \tr \left( \breve \sigma_f \alpha_x (P) \right) 
= \tr \left (n \int f(h) \alpha_h(P) dh \alpha_g(P)\right)\\
& = n \int f(h) \tr (P  \alpha_{h^{-1}g}(P)) dh = n \int f(g h^{-1} )H_P(h)  dh
\end{align*}
using also that $H_P(h^{-1} ) = H_P(h)$. 
\endproof
Again, one readily observes that $\breve \sigma$ is an $SU(2)$-equivariant map.

\subsubsection{The sphere as a limit of matrix algebras}
We now show in a series of Lemma's that the conditions of Definition \ref{defn:GH-conv} hold for $R_n= \breve \sigma$ and $S_n = \sigma$. 

\begin{lma}
\label{lma:est-comm-f}
For any $f \in C^\infty(S^2)$ we have $\| \breve \sigma_f \| \leq \| f \|$ and $\| [D_n,\breve \sigma_f]\| \leq \| [D_{S^2},f]\|$. 
\end{lma}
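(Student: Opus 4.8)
The plan is to prove both inequalities by exploiting that $\breve\sigma_f = n\int f(g)\,\alpha_g(P)\,dg$ is an average of positive operators against a probability-normalized integrand, together with the $SU(2)$-equivariance of $\breve\sigma$ already noted after Proposition \ref{prop:berezin}. For the norm bound, I would first record that $\breve\sigma$ is a positive unital map: positivity is immediate since $\alpha_g(P)\geq 0$ and $f\geq 0$ implies $\breve\sigma_f\geq 0$, while unitality ($\breve\sigma_1 = \id$) follows because $n\int \alpha_g(P)\,dg$ is the $SU(2)$-average of a rank-one projection in the $n$-dimensional irreducible representation, hence a multiple of the identity that must equal $\id$ by taking traces (indeed $\tr(n\int\alpha_g(P)dg) = n\int \tr(P)\,dg = n$). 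A positive unital map between operator systems is automatically contractive for the operator norm (this is the standard fact that positive unital maps have norm $1$, \emph{cf.}\ \cite[Proposition 2.1]{Pau02}), which gives $\|\breve\sigma_f\|\leq\|f\|$ directly.

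For the commutator estimate, the key step is the equivariance identity. The maps $X_{jk}$ on $C^\infty(S^2)$ and the inner derivations $[L_{jk},\cdot\,]$ on $L(V_n)$ are the infinitesimal generators of the same $SU(2)$-action under which $\breve\sigma$ intertwines. Because $\breve\sigma$ is $SU(2)$-equivariant, differentiating the relation $\breve\sigma(f\circ g^{-1}) = \alpha_g(\breve\sigma_f)$ at the identity yields $\breve\sigma(X_{jk}f) = [L_{jk},\breve\sigma_f]$ for each generator. Writing both Dirac operators in the common form $\sum_{j<k}\sigma^j\sigma^k\otimes(\cdot)$ from \eqref{eq:dirac-S2} and \eqref{eq:dirac-fuzzy-S2}, and noting that $\id\otimes\breve\sigma$ intertwines $[D_n,\,\cdot\,]$ with the truncation of $[D_{S^2},\,\cdot\,]$ (modulo the chirality factor $\vec x\cdot\vec\sigma$, which is a unitary pointwise and so does not change the operator norm), I would conclude that $[D_n,\breve\sigma_f]$ equals $(\id\otimes\breve\sigma)$ applied to $[D_{S^2},f]$. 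Since $\id\otimes\breve\sigma$ is again positive and unital, hence norm-contractive, the bound $\|[D_n,\breve\sigma_f]\|\leq\|[D_{S^2},f]\|$ follows.

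The main obstacle I anticipate is making the intertwining of the two Dirac commutators precise, because the sphere's Dirac operator \eqref{eq:dirac-S2} carries the chirality prefactor $\vec x\cdot\vec\sigma$ that the fuzzy Dirac operator \eqref{eq:dirac-fuzzy-S2} lacks. I would handle this by observing that $[D_{S^2},f] = (\vec x\cdot\vec\sigma)\sum_{j<k}\sigma^j\sigma^k\otimes X_{jk}f$, since $f$ commutes with the multiplication operator $\vec x\cdot\vec\sigma$ as both act on different tensor factors / by pointwise multiplication, and then use that $\vec x\cdot\vec\sigma$ is a pointwise unitary so $\|[D_{S^2},f]\| = \|\sum_{j<k}\sigma^j\sigma^k\otimes X_{jk}f\|$. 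This reduces the problem to comparing $\sum_{j<k}\sigma^j\sigma^k\otimes[L_{jk},\breve\sigma_f]$ with $(\id\otimes\breve\sigma)\bigl(\sum_{j<k}\sigma^j\sigma^k\otimes X_{jk}f\bigr)$, which coincide by the equivariance identity $[L_{jk},\breve\sigma_f]=\breve\sigma(X_{jk}f)$. Care is needed that the equivariance genuinely lines up the $su(2)$ generators with matching signs and indices, so I would verify the differentiated identity on a single generator before assembling the full sum.
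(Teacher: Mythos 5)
Your overall route coincides with the paper's at the decisive points: the norm bound via positivity of $\breve\sigma$, the differentiated equivariance identity $[L_{jk},\breve\sigma_f]=\breve\sigma(X_{jk}f)$, and the rewriting of $[D_n,\breve\sigma_f]$ as $(\id\otimes\breve\sigma)$ applied to $\sum_{j<k}\sigma^j\sigma^k\otimes X_{jk}f$. (Your explicit removal of the chirality factor $\vec x\cdot\vec\sigma$ as a pointwise self-adjoint unitary is in fact more careful than the paper, which suppresses it, and your Schur-lemma proof of unitality is a legitimate elementary substitute for the paper's citation of Landsman for contractivity of the Berezin quantization map.) However, there is one genuine gap, and it sits exactly at the crux of the commutator estimate: the inference that ``$\id\otimes\breve\sigma$ is \emph{again} positive and unital, hence norm-contractive.'' Positivity of a linear map does \emph{not} pass to its matrix amplifications: the transpose map $t$ on $M_2(\C)$ is positive and unital, yet $\id\otimes t$ on $M_2(M_2(\C))$ is not positive and has norm $2$ (it sends the swap operator, of norm $1$, to twice a rank-one projection). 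What you need at this step is precisely $2$-positivity (or complete positivity) of $\breve\sigma$, and that is the statement requiring proof; asserting it as a formal consequence of positivity is a non sequitur --- were it valid, every positive unital map would be completely contractive, which the transpose refutes.

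The gap is closed in either of two ways. The paper's way: since the domain $C(S^2)$ is a \emph{commutative} $C^*$-algebra, Stinespring's theorem \cite{Sti55} ({\em cf.} \cite[Theorem 3.11]{Pau02}) guarantees that the positive map $\breve\sigma$ is automatically completely positive, and then $\| \breve\sigma\|_{cb} = \|\breve\sigma\|\leq 1$ gives $\|1\otimes\breve\sigma\|\leq 1$. Alternatively, your own integral argument amplifies verbatim: for a positive element $A \in M_2(C(S^2))$ one has $(\id\otimes\breve\sigma)(A) = n\int A(g)\otimes\alpha_g(P)\,dg \geq 0$, an integral of positive operators, after which contractivity of the positive unital map $\id\otimes\breve\sigma$ on the $C^*$-algebra $M_2(C(S^2))$ follows from the Russo--Dye theorem. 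A related minor point: the ``standard fact'' you cite for the norm bound is misquoted --- \cite[Proposition 2.1]{Pau02} only gives $\|\phi\|\leq 2\|\phi(1)\|$ for positive maps on operator systems, and the factor $2$ is sharp; the norm-one conclusion requires the domain to be a $C^*$-algebra (Russo--Dye). Since your domain is $C(S^2)$ this does no harm here, but it is the same oversight about where positivity alone suffices that produced the real gap above.
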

\proof
The contractive property of $\breve \sigma$ is proved for instance in \cite[Theorem 1.3.5]{Lns98a} where  $\breve \sigma$ is the Berezin quantization map. Then, by $SU(2)$-equivariance of $\breve \sigma$ we have
$$
[D_n,\breve \sigma_f] = \sum_{j<k} \sigma^j \sigma^k  \otimes [L_{jk}, \breve \sigma_f ] = \left( \sum_{j<k} \sigma^j \sigma^k \otimes \breve \sigma([X_{jk},f]) \right) =  (1 \otimes \breve \sigma ) [D_{S^2},f].
$$
Since $\breve \sigma$ is a positive map from a commutative domain to a $C^*$-algebra, it follows by a Theorem by Stinespring \cite{Sti55} ({\em cf.} \cite[Theorem 3.11]{Pau02}) that $\breve \sigma$ is completely positive. But then, it follows from \cite[Proposition 3.6]{Pau02} that $\breve \sigma$ is completely bounded with $\| \breve\sigma \|_{cb} = \| \breve \sigma \|$. In particular, $\| 1 \otimes \breve \sigma \| \leq \| \breve \sigma\|\leq 1$ so that it follows that
\[
\| [D_n,\breve \sigma_f] \|  \leq \| (1 \otimes \breve \sigma )\| \|  [D_{S^2},f]\| \leq  \|  [D_{S^2},f]\|. \qedhere
\]
\endproof

\begin{lma}
There exists a sequence $\{\gamma_n\}$ converging to 0 such that
$$
\| f - \sigma (\breve \sigma_f) \| \leq \gamma_n \| [D_{S^2},f]\| 
$$
for all $f \in C^\infty(S^2)$.
\end{lma}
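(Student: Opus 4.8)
The plan is to run the same approximate-identity argument as in Lemma \ref{lma:est-f}, now exploiting that the Berezin transform is again a convolution against a good kernel. By Proposition \ref{prop:berezin} we have $\sigma(\breve\sigma_f) = f \ast H_P$, with $H_P(g) = n\tr(P\alpha_g(P))$ a probability measure on $SU(2)$. Since the highest-weight projection $P$ is fixed by the stabilizer $U(1)$, the kernel $H_P$ is bi-$U(1)$-invariant and hence descends to a probability measure on $S^2 = SU(2)/U(1)$, so the convolution against the $U(1)$-invariant function $f$ is well defined and again represents a function on $S^2$.

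First I would establish a pointwise bound. Using that $H_P$ integrates to $1$, for every $g \in SU(2)$,
$$
f(g) - \sigma(\breve\sigma_f)(g) = \int_{SU(2)} \bigl( f(g) - f(gh^{-1}) \bigr) H_P(h)\, dh.
$$
By Connes' identification of the commutator norm with the Lipschitz constant \cite[Proposition 1]{C89}, one has $|f(g) - f(gh^{-1})| \leq \|[D_{S^2},f]\|\,\rho\bigl(\pi(g),\pi(gh^{-1})\bigr)$, where $\rho$ is the geodesic distance on $S^2$ and $\pi : SU(2) \to S^2$ is the quotient map. Taking absolute values and integrating gives
$$
|f(g) - \sigma(\breve\sigma_f)(g)| \leq \|[D_{S^2},f]\| \int_{SU(2)} \rho\bigl(\pi(g),\pi(gh^{-1})\bigr) H_P(h)\, dh.
$$
Next I would remove the dependence on $g$: since each $g$ acts on $S^2$ as an isometry with $g\cdot\pi(k)=\pi(gk)$, we have $\rho(\pi(g),\pi(gh^{-1})) = \rho(\pi(e),\pi(h^{-1}))$, so the integral equals the $g$-independent quantity
$$
\gamma_n := \int_{SU(2)} \rho\bigl(N,\pi(h^{-1})\bigr) H_P(h)\, dh = \int_{S^2} \rho(N,y)\, \widetilde H_P(y)\, dy,
$$
with $N = \pi(e)$ the base point (north pole) and $\widetilde H_P$ the descended density. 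Taking the supremum over $g$ then yields $\|f - \sigma(\breve\sigma_f)\| \leq \gamma_n \|[D_{S^2},f]\|$, exactly the asserted inequality.

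The main obstacle is to show $\gamma_n \to 0$, i.e.\ that $H_P$ concentrates at $N$ in the sense that its mean geodesic displacement vanishes. This is precisely the statement that the coherent-state overlap $H_P(g) = n\,|\langle \psi_0, U(g)\psi_0\rangle|^2$ becomes sharply peaked at $N$ as the spin $(n-1)/2 \to \infty$; in polar coordinates $\widetilde H_P$ is proportional to $\cos^{2(n-1)}(\theta/2)$, which heuristically gives $\gamma_n = O(n^{-1/2})$. Rather than compute this moment explicitly I would invoke the concentration of the Berezin kernel established by Rieffel in \cite[Section 2]{Rie02}—equivalently, that the Berezin transform $f \mapsto f \ast H_P$ converges to the identity—which directly supplies a sequence $\gamma_n \to 0$ with the required property. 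This quantitative spread of $H_P$ is where the real content lies; the remainder is the same approximate-identity estimate already used in Lemma \ref{lma:est-f}.
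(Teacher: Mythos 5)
Your proposal is correct and follows essentially the same route as the paper: both use the convolution expression $\sigma(\breve\sigma_f) = f \ast H_P$ from Proposition~\ref{prop:berezin}, the identification of $\|[D_{S^2},f]\|$ with the Lipschitz constant, and the $SU(2)$-invariance of the round metric to reduce everything to the $g$-independent first moment $\gamma_n = \int d(e,h)\, H_P(h)\, dh$. The only difference is cosmetic: the paper simply asserts that this moment tends to $0$, whereas you justify it via the coherent-state concentration $\widetilde H_P \propto n\cos^{2(n-1)}(\theta/2)$ and a citation to \cite[Sect.~2]{Rie02}, which is if anything slightly more complete.
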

\proof
We exploit the expression for $\sigma (\breve \sigma_f)$ as a convolution product from Proposition \ref{prop:berezin}. Indeed, 
\begin{align*}
|f(g) -  \sigma (\breve \sigma_f) (g) |&= \left| \int (f(g)  - f(h) ) H_P (h^{-1} g) dh \right| \\
& \leq \| f \|_{\text{Lip}} \int d(g,h) H_P  (h^{-1} g) dh = \| f \|_{\text{Lip}} \int d(e,h) H_P  (h) dh 
\end{align*} 
where $d$ is the $SU(2)$-invariant (round) distance on $SU(2)/U(1)$ and $\| f \|_{\text{Lip}}$ is the corresponding Lipschitz seminorm of $f$. Since $\| f \|_{\text{Lip}} = \|[D_{S^2},f]\|$ by standard arguments \cite[Sect. VI.1]{C94} and $\int d(e,h) H_P  (h) dh \to 0$ as $n \to \infty$, the result follows. 
\endproof

\begin{lma}
\label{lma:est-comm-T}
For any $T \in L(V_n)$ we have $\| \sigma_T \| \leq \|T \|$ and $\| [D_{S^2},\sigma_T] \| \leq \| [D_n,T]\|$.
\end{lma}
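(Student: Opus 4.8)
The plan is to establish the two inequalities in Lemma~\ref{lma:est-comm-T} separately, mirroring the structure of the analogous estimates in Lemma~\ref{lma:est-comm-f} but now working in the opposite direction (from the matrix algebra to the sphere). For the norm estimate $\|\sigma_T\| \leq \|T\|$, the key observation is that $\sigma$ is the Berezin symbol map, which can be written pointwise as $\sigma_T(g) = \tr(T \alpha_g(P))$. Since $P$ is a rank-one projection onto the highest-weight vector, $\alpha_g(P)$ is again a rank-one projection, hence a positive operator of trace one, i.e. a state (density matrix) on $L(V_n)$. Therefore $|\sigma_T(g)| = |\tr(T\alpha_g(P))| \leq \|T\|$ for every $g$, and taking the supremum over $g \in S^2$ gives $\|\sigma_T\|_\infty \leq \|T\|$. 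Equivalently one can note that $\sigma$ is positive and unital, hence contractive.

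For the Lipschitz estimate $\|[D_{S^2},\sigma_T]\| \leq \|[D_n,T]\|$, the plan is to exploit the $SU(2)$-equivariance of $\sigma$ in exactly the dual manner to the computation in Lemma~\ref{lma:est-comm-f}. Since $\sigma$ is $SU(2)$-equivariant, it intertwines the infinitesimal generators: $\sigma([L_{jk},T]) = [X_{jk},\sigma_T]$. Consequently, applying $1\otimes\sigma$ to the fuzzy Dirac commutator yields
\begin{equation*}
(1\otimes\sigma)[D_n,T] = \sum_{j<k}\sigma^j\sigma^k \otimes \sigma([L_{jk},T]) = \sum_{j<k}\sigma^j\sigma^k\otimes[X_{jk},\sigma_T] = [D_{S^2},\sigma_T].
\end{equation*}
It then suffices to show that $1\otimes\sigma$ is a contraction. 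The hard part will be justifying this: $\sigma$ maps into the commutative $C^*$-algebra $C(S^2)$ but its domain $L(V_n)$ is noncommutative, so complete positivity is not automatic in the way it was for $\breve\sigma$. However, $\sigma$ is a positive unital map with a commutative \emph{codomain}, and by the dual form of Stinespring's theorem (\cite[Theorem 3.9]{Pau02}, the companion to the result cited for $\breve\sigma$) any positive map into a commutative $C^*$-algebra is automatically completely positive. Hence $\|\sigma\|_{\cb} = \|\sigma\| \leq 1$, which gives $\|1\otimes\sigma\|\leq 1$ and therefore
\begin{equation*}
\|[D_{S^2},\sigma_T]\| = \|(1\otimes\sigma)[D_n,T]\| \leq \|[D_n,T]\|.
\end{equation*}

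The main obstacle, as anticipated, is the complete-boundedness step for the Lipschitz inequality, since the tensor factor $1\otimes\sigma$ requires control of $\sigma$ on matrix amplifications, not merely its operator norm. Resolving this hinges on identifying which direction of the noncommutative Stinespring dichotomy applies: here it is the commutativity of the \emph{target} $C(S^2)$ (rather than the \emph{source}, as was used for $\breve\sigma$) that forces complete positivity. Once this is in place, both inequalities follow cleanly, and the $SU(2)$-equivariance does all the heavy lifting for the commutator identity.
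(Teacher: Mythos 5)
Your proposal is correct and follows essentially the same route as the paper: the norm bound via $|\tr(T\alpha_g(P))|\leq\|T\|$ for the density matrix $\alpha_g(P)$, and the Lipschitz bound via $SU(2)$-equivariance giving $[D_{S^2},\sigma_T]=(1\otimes\sigma)[D_n,T]$, with $\|\sigma\|_{\cb}=\|\sigma\|$ from \cite[Theorem 3.9]{Pau02} because the \emph{range} of $\sigma$ is commutative. You even pinpointed the same dichotomy the paper uses (commutative target for $\sigma$ versus commutative source for $\breve\sigma$), so there is nothing to add.
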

\proof
The map $\sigma$ is a contraction: 
$$
\| \sigma_T\|  = \sup_g | \tr T \alpha_g(P) | \leq \| T \| \sup_g  \tr | \alpha_g(P)| = \| T\| .
$$
Since $\sigma$ is also $SU(2)$-equivariant we again find that 
$$
[D_{S^2},\sigma_T] = \sum_{j<k} \sigma^j \sigma^k  \otimes [X_{jk},  \sigma_T ] = \left( \sum_{j<k} \sigma^j \sigma^k \otimes  \sigma([L_{jk},T]) \right) =  (1 \otimes \sigma ) [D_n,T].
$$
Since the range of $\sigma$ is a commutative $C^*$-algebra it follows from \cite[Theorem 3.9]{Pau02} that $\| \sigma\|_{cb} = \| \sigma\|$. Hence 
$$
\| [D_{S^2},\sigma_T] \| \leq \| (1 \otimes \sigma )\| \|  [D_n,T]\| \leq \| [D_n,T]\|
$$
since $\sigma$ is a contraction. 
\endproof

\begin{lma}
\label{lma:est-T}
There exists a sequence $\{\gamma_n'\}$ converging to 0 such that
$$
\| T - \breve \sigma (\sigma_T) \| \leq \gamma'_n \| [D_n,T]\| 
$$
for all $T \in L(V_n)$.
\end{lma}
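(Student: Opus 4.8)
The plan is to follow Rieffel \cite{Rie02} and to rest everything on the $SU(2)$-equivariance of $\sigma$ and $\breve\sigma$. Under the adjoint action of $SU(2)$ the matrix algebra decomposes multiplicity-freely into irreducibles,
$$
L(V_n) = \bigoplus_{l=0}^{n-1} W_l ,
$$
with $W_l$ the spin-$l$ representation of dimension $2l+1$. Since $\breve\sigma = \sigma^*$ is the adjoint of $\sigma$ for the $L^2$- and Hilbert--Schmidt inner products, the composition $\breve\sigma\circ\sigma = \sigma^*\sigma$ is a positive, self-adjoint, $SU(2)$-equivariant endomorphism of $L(V_n)$, so by Schur's lemma it acts as a scalar $\lambda_l$ on each $W_l$. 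First I would record that $\lambda_l \in [0,1]$ (positivity is automatic, while $\lambda_l \leq 1$ follows because $\sigma\sigma^*$ is convolution by the \emph{probability} measure $H_P$ of Proposition \ref{prop:berezin}) and that unitality of $\sigma,\breve\sigma$ forces $\lambda_0 = 1$. Writing $T = \sum_l T_l$ with $T_l \in W_l$, the trivial component then cancels and
$$
T - \breve\sigma(\sigma_T) = \sum_{l=1}^{n-1} (1-\lambda_l)\, T_l .
$$

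Next I would pin down the $\lambda_l$ via the symbol map. Applying $\sigma$ and using Proposition \ref{prop:berezin}, the Berezin symbol of $T - \breve\sigma\sigma_T$ is $(1-\sigma\breve\sigma)\sigma_T = \sigma_T - H_P * \sigma_T$; since $\sigma^*\sigma$ and $\sigma\sigma^*$ share their nonzero eigenvalues, the $\lambda_l$ are exactly the convolution eigenvalues of $H_P$ on the spin-$l$ spherical harmonics. These are computable from the $SU(2)$ coherent-state overlap defining $H_P$, and the quantitative input I need is the estimate
$$
0 \leq 1 - \lambda_l \leq \frac{C\, l(l+1)}{n} \qquad (1 \leq l \leq n-1),
$$
which reflects that $H_P$ concentrates at the identity with variance of order $1/n$ and so behaves like a small-time heat kernel.

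To relate this to the commutator I would use the Casimir of the adjoint action. Realizing the algebra by left multiplication one computes $[D_n,T] = \sum_{j<k} \sigma^j\sigma^k \otimes L_{[L_{jk},T]}$, whose square is comparable to $\Delta = -\sum_{j<k}\mathrm{ad}(L_{jk})^2$, acting as $l(l+1)$ on $W_l$; this gives a lower bound of the shape $\|[D_n,T_l]\| \gtrsim \sqrt{l(l+1)}\,\|T_l\|$ on each irreducible. Combining $1-\lambda_l \leq \min(1, Cl(l+1)/n)$ with this lower bound, the ratio $(1-\lambda_l)/\sqrt{l(l+1)}$ is maximized near $l \sim \sqrt n$ and is of order $n^{-1/2}$, so that \emph{on each isotypic component} the sought inequality holds with a constant $\gamma_n' = O(n^{-1/2})$.

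The main obstacle is the passage from these component-wise (Hilbert--Schmidt level) estimates to a single bound in \emph{operator} norm: the displayed identity writes $T-\breve\sigma\sigma_T$ as a sum of mutually orthogonal pieces, and the operator norm of such a sum is not governed by the norms of the summands. I expect this to be the crux, and would resolve it in the same spirit as the Toeplitz case treated above: one expresses $\mathrm{id} - \breve\sigma\sigma$ as the composition of $[D_n,\cdot]$ with a completely bounded spherical multiplier $\Phi$ determined by $\{(1-\lambda_l)\}$ and the Dirac symbol, and bounds $\|\Phi\|_{\cb}$ by a sequence $\gamma_n' \to 0$ --- the role played there by the estimate for $\|T_n\|_{\cb}$ from \cite{ACN92}. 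Together with Lemma \ref{lma:est-comm-T} this would yield $\|T - \breve\sigma(\sigma_T)\| \leq \gamma_n'\,\|[D_n,T]\|$ with $\gamma_n' \to 0$, as required.
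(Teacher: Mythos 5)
Your proposal contains a genuine gap, and you have in fact located it yourself: the passage from isotypic-componentwise estimates to a bound in \emph{operator} norm. The preliminary steps (the multiplicity-free decomposition $L(V_n)=\bigoplus_l W_l$ under the adjoint action, Schur's lemma producing scalars $\lambda_l$ for $\breve\sigma\circ\sigma$, the identification of the $\lambda_l$ with the convolution eigenvalues of $H_P$, the estimate $1-\lambda_l\lesssim l(l+1)/n$) amount to the beginning of a re-proof of Rieffel's Theorem 6.1 in \cite{Rie02}, but your final step --- writing $\mathrm{id}-\breve\sigma\sigma$ as $[D_n,\cdot]$ followed by a spherical multiplier $\Phi$ with $\|\Phi\|_{\mathrm{cb}}\to 0$ --- is not carried out, and as stated it is essentially a reformulation of the lemma rather than a reduction of it. In the Toeplitz case the analogous multiplier was the explicit triangular-truncation matrix, whose cb-norm has the known $O(\log n)$ bound of \cite{ACN92}; here no analogous bound is exhibited, and producing one is exactly the ``highly non-trivial'' content that Rieffel's theorem supplies. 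There is also an unacknowledged weak point earlier in your sketch: the componentwise lower bound $\|[D_n,T_l]\|\gtrsim\sqrt{l(l+1)}\,\|T_l\|$ is a Casimir statement, natural in Hilbert--Schmidt norm; converting Hilbert--Schmidt estimates into operator-norm estimates costs dimension-dependent factors of order $\sqrt{n}$, so even the claimed $O(n^{-1/2})$ rate on a single isotypic component is not justified as written.

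For comparison, the paper avoids all of this spectral analysis. Its proof quotes \cite[Theorem 6.1]{Rie02} as a black box, giving $\|T-\breve\sigma(\sigma_T)\|\leq\gamma_n' L_n(T)$ where $L_n(T)=\sup_{g\neq e}\|\alpha_g(T)-T\|/l(g)$ is the Lipschitz seminorm coming from the group action, and then completes the argument by two seminorm comparisons: $L_n(T)\leq\sup\{\|[X,T]\|: X\in su(2),\ \|X\|\leq 1\}$ (as in the proof of \cite[Theorem 3.1]{Rie98}), and a bound of the latter by $k\,\|[D_n,T]\|$ with $k$ independent of $n$ (as in \cite{Rie98}). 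If you want a self-contained argument along your lines, you would need to actually establish the cb-multiplier bound, which amounts to redoing Rieffel's work; the efficient route is to cite his theorem and supply only the comparison of Lipschitz seminorms, which is what the paper does.
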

\begin{proof}
This is based on a highly non-trivial result \cite[Theorem 6.1]{Rie02} which states that there exists a sequence $\{\gamma_n'\}$ converging to 0 such that
$$
\| T - \breve \sigma (\sigma_T) \| \leq \gamma'_n L_n (T)
$$
for all $T \in L(V_n)$, where $L_n$ is the Lipschitz norm on $L(V_n)$ defined by
$$
L_n(T) = \sup_{g \neq e} \frac{ \| \alpha_g (T) - T \| }{l(g)}
$$
for a length function $g$ on $SU(2)$ that induces the round metric on $S^2$. However, as in the proof of \cite[Theorem 3.1]{Rie98} we may estimate
$$
L_n(T) \leq \sup_{X \in su(2) }  \{ \| [X,T] \|: \| X \| \leq 1 \} 
$$
while the right-hand side can be bounded from above by $k \| [D,T] \|$ for some constant $k$ independent of $n$ (as in the display preceding \cite[Theorem 4.2]{Rie98}. 
\end{proof}

We have thus verified that the maps $(\breve \sigma,\sigma)$ between $\{ L(V_n) , \C^2 \otimes L(V_n) ,D_n)$ and $(C^\infty(S^2), \C^2\otimes L^2(S^2), D_{S^2})$ forms a $C^1$-approximate order isomorphism and we may conclude from Theorem \ref{thm:GH-conv} that
\begin{prop}
The sequence of state spaces $\{ (\cS(L(V_n) ),d_n)\}_n$ converges in Gromov--Hausdorff distance to $(\cS(C(S^2)),d)$. 
\end{prop}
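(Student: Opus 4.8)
The plan is to deduce this directly from Theorem \ref{thm:GH-conv}, applied to the sequence of fuzzy-sphere triples \eqref{eq:spectral-triple-fuzzy-S2} together with the commutative triple \eqref{eq:spectral-triple-S2}, taking for the pair of maps $(R_n,S_n)=(\breve\sigma,\sigma)$ the Berezin quantization and symbol maps. Two families of hypotheses have to be secured. First, the metric topologies must agree with the weak-$*$ topologies: on each finite-dimensional $\cS(L(V_n))$ this is immediate from the earlier remark, since $\ker[D_n,\cdot]=\C 1$ forces $\|[D_n,\cdot]\|$ to descend to a genuine norm on $L(V_n)/\C 1$ whose unit ball is compact, whereas on $\cS(C(S^2))$ it is the classical commutative statement of Connes. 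Second, and this is the substance, I must verify that $(\breve\sigma,\sigma)$ is a $C^1$-approximate order isomorphism in the sense of Definition \ref{defn:GH-conv}.

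Breaking the latter into its three defining conditions, I would first record that positivity and unitality of $\sigma$ and $\breve\sigma$ are read off their trace/integral representations: since $P$ is a rank-one projection one has $\sigma(1)(g)=\tr(\alpha_g(P))=\tr(P)=1$, while $\int\alpha_g(P)\,dg$ equals $\frac1n$ times the identity by $SU(2)$-invariance, so $\breve\sigma_1=\id$. The $C^1$-contractivity is exactly the content of Lemmas \ref{lma:est-comm-f} and \ref{lma:est-comm-T}, which bound both the operator norms and the commutator (Lipschitz) norms; the mechanism there is the $SU(2)$-equivariance of $\sigma$ and $\breve\sigma$, intertwining $[D_{S^2},\cdot]$ with $[D_n,\cdot]$ up to a factor $1\otimes\sigma$ (resp.\ $1\otimes\breve\sigma$), combined with the fact that a positive map into or out of a commutative $C^*$-algebra is automatically completely positive, so that $\|\cdot\|_{cb}=\|\cdot\|$. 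Finally the two approximate-inverse estimates supply the sequences $\gamma_n,\gamma_n'\to 0$: the Berezin-transform bound $\|f-\sigma(\breve\sigma_f)\|\le\gamma_n\|[D_{S^2},f]\|$ follows by writing $\sigma\breve\sigma$ as convolution with the probability measure $H_P$ of Proposition \ref{prop:berezin} and using that its first moment $\int d(e,h)H_P(h)\,dh$ tends to $0$, while the reverse estimate is Lemma \ref{lma:est-T}.

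Once all of these are in hand, Theorem \ref{thm:GH-conv} yields the Gromov--Hausdorff convergence at once. I expect the main obstacle to be precisely the reverse estimate $\|T-\breve\sigma(\sigma_T)\|\le\gamma_n'\|[D_n,T]\|$ of Lemma \ref{lma:est-T}, which, unlike its counterpart, does not reduce to an elementary approximate-identity computation. It rests on Rieffel's deep result \cite[Theorem 6.1]{Rie02}, that the Berezin transform on $L(V_n)$ converges to the identity at a rate controlled by the length-function Lipschitz norm $L_n$, and then on the further comparison $L_n(T)\le k\,\|[D_n,T]\|$ with a constant $k$ independent of $n$. Establishing that uniform comparison --- translating the length-function seminorm into the commutator seminorm of the fuzzy Dirac operator, in the spirit of \cite[Theorem 3.1 and the display preceding Theorem 4.2]{Rie98} --- is where the genuine work of this example is concentrated, and it is exactly what allows Rieffel's quantum Gromov--Hausdorff convergence to be imported into the present operator-system framework.
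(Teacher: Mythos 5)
Your proposal is correct and follows essentially the same route as the paper: the same pair $(\breve\sigma,\sigma)$ of Berezin quantization/symbol maps, the same $C^1$-contractivity arguments via $SU(2)$-equivariance and complete positivity, the same approximate-inverse estimates (convolution with $H_P$ on one side, Rieffel's Theorem 6.1 together with the comparison $L_n(T)\le k\|[D_n,T]\|$ on the other), all fed into Theorem \ref{thm:GH-conv}. You have correctly identified the reverse estimate of Lemma \ref{lma:est-T} as the genuinely hard step, which is exactly where the paper defers to Rieffel's work.
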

\newcommand{\noopsort}[1]{}\def\cprime{$'$}

\end{document}